\newtheorem{teor}{Theorem}
\newtheorem{lema}{Lemma}
\newtheorem{prop}{Proposition}
\newtheorem{corol}{Corollary}
\theoremstyle{remark}
\newtheorem{remark}{Remark}
\title{Equilibrium states and zero temperature limit on topologically transitive countable Markov shifts}
\author{Ricardo Freire\thanks{Supported by FAPESP process 2011/16265-8.} \\
\footnotesize{Department of Mathematics, IME-USP, Brazil}\\
\footnotesize{\texttt{rfreire@usp.br}}
 \and Victor Vargas\thanks{Supported by CAPES. Parts of these results were in the author's PhD thesis.} \\
\footnotesize{Department of Mathematics, IME-USP, Brazil}\\
\footnotesize{\texttt{vavargascu@gmail.com}}
}
\begin{document}
\maketitle

\begin{abstract}
Consider a topologically transitive countable Markov shift and, let $f$ be a summable potential with bounded variation and finite Gurevic pressure. We prove that there exists an equilibrium state $\mu_{tf}$ for each $t > 1$ and that there exists accumulation points for the family $(\mu_{tf})_{t>1}$ as $t \to \infty$. We also prove that the Kolmogorov-Sinai entropy is continuous at $\infty$ with respect to the parameter $t$, that is $\lim_{t \to \infty} h(\mu_{tf})=h(\mu_{\infty})$, where $\mu_{\infty}$ is an accumulation point of the family $(\mu_{tf})_{t>1}$. These results do not depend on the existence of Gibbs measures and, therefore, they extend results of \cite{MaUr01} and \cite{Sar99} for the existence of equilibrium states without the BIP property, \cite{JMU05} for the existence of accumulation points in this case and, finally, we extend completely the result of \cite{Mor07} for the entropy zero temperature limit beyond the finitely primitive case.
\end{abstract}

\vspace*{5mm}

{\footnotesize {\bf Keywords:} equilibrium states, Gibbs measures, summable potentials, Markov shifts, zero temperature limit.}

\vspace*{5mm}

{\footnotesize {\bf Mathematics Subject Classification (2000):} 28Dxx, 37Axx}

\vspace*{5mm}

\section{Introduction}

The thermodynamic formalism is a branch of the ergodic theory that studies existence, uniqueness and properties of equilibrium states, that is, measures that maximize the value $h(\mu) + \int f d\mu$ where $h(\mu)$ is the Kolmogorov-Sinai entropy. If, for each $t > 1$, there is an unique equilibrium state $\mu_{tf}$ associated to the potential $tf$, an interesting problem is to study the accumulation points of the family $(\mu_{tf})_{t>1}$, as well as the behavior of the family as $t \to \infty$, since in statistical mechanics any accumulation points are the ground states of the system. 

We are interested in the case of topologically transitive countable Markov shifts. It is well known from \cite{BuSa03} that there is at most one equilibrium state, and existence is guaranteed usually by means of a strong condition on the dynamics. It is usually assumed that the incidence matrix satisfies properties like being finitely primitive, which is equivalent to the big images and preimages (BIP) property when the shift is topologically mixing, which is equivalent to the existence of Gibbs measures \cite{Sar03}. In general, it is very difficult to find a simple condition without such hypothesis. Our first results goes in this direction, and therefore extends the results for the existence of equilibrium states beyond the finitely primitive case, as the ones in \cite{MaUr01} and \cite{Sar99}.  

\begin{teor}\label{teor1} Let $\Sigma$ be a topologically transitive countable Markov shift and $f : \Sigma \to  \mathbb{R}$ a summable potential such that $V(f) < \infty$ and $P_G(f)<\infty$. Then, for any $t > 1$ there is an unique equilibrium state $\mu_{tf}$ associated to the potential ${tf}$. Also, we have that as $t\to\infty$, there exists accumulation points for the family $(\mu_{tf})_{t>1}$.
\end{teor}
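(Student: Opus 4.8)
The plan is to deduce uniqueness from the general theory, to produce an equilibrium state by exhausting $\Sigma$ from the inside by finite-state topologically transitive subshifts, and to obtain the accumulation points from a tightness estimate that is uniform in $t$; in both limits what makes things work is the summability of $f$ together with the \emph{strict} inequality $t>1$. For uniqueness, observe that for each $t>1$ the potential $tf$ still has bounded variation, is still summable (since $f$ summable forces $\sup_{[a]}f\to-\infty$, so $e^{t\sup_{[a]}f}\le e^{\sup_{[a]}f}$ for large $a$), and still satisfies $P_G(tf)<\infty$ (bound the partition functions by $Z_n(tf,a)\le e^{\sup_{[a]}tf}\big(\sum_b e^{\sup_{[b]}tf}\big)^{n-1}$); thus by \cite{BuSa03} there is at most one equilibrium state for $tf$, and it remains to exhibit one.

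For existence, fix a symbol $a_0$ and, by topological transitivity, a periodic orbit $O$ through $a_0$; let $\Sigma_N$ be the finite-alphabet topologically transitive subshift generated by the symbols admitting an admissible loop through $a_0$ of length at most $N$, so that $\Sigma_N\uparrow\Sigma$ and, by the approximation of the Gurevich pressure by compact subsystems (\cite{Sar99}), $P_N:=P_G(tf|_{\Sigma_N})\uparrow P:=P_G(tf)$. On $\Sigma_N$ the classical theory provides a unique equilibrium state $\mu_N$ for $tf|_{\Sigma_N}$, which is a Gibbs measure there with a constant $C_t$ controlled solely by $V(tf)=tV(f)$; in particular $\mu_N([b])\le C_t\,e^{t\sup_{[b]}f-P_N}$. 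Since $\inf_N P_N>-\infty$ and $\sum_b e^{t\sup_{[b]}f}<\infty$, these bounds together with $\sigma$-invariance ($\mu_N(\{x:x_i=b\})=\mu_N([b])$) show that $(\mu_N)_N$ is uniformly tight: picking finite sets $F_i\uparrow\mathbb N$ with $\sum_{b\notin F_i}e^{t\sup_{[b]}f}$ summably small makes $\mu_N(\{x:x_i\notin F_i\text{ for some }i\})$ small uniformly in $N$, and $\{x:x_i\in F_i\ \forall i\}$ is compact. Extract $\mu_{N_j}\to\mu$ weakly; tightness makes $\mu$ a $\sigma$-invariant probability, and letting $j\to\infty$ in the Gibbs inequalities on cylinders (clopen sets), with $P_{N_j}\to P$, shows that $\mu$ is a Gibbs measure for $tf$ on all of $\Sigma$. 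Here $t>1$ is used: from $\mu([b])\le C_t e^{t\sup_{[b]}f-P}$, $-\inf_{[b]}f\le -\sup_{[b]}f+V(f)$ and the elementary fact $(-\sup_{[b]}f)\,e^{(t-1)\sup_{[b]}f}\to0$, one gets both $-tf\in L^1(\mu)$ and $H_\mu(\mathcal P)<\infty$ for the partition $\mathcal P$ into $1$-cylinders. Since $\mu$ is ergodic (a Gibbs measure for a bounded-variation potential on a topologically transitive Markov shift), Shannon--McMillan--Breiman and Birkhoff (the latter thanks to $-tf\in L^1(\mu)$) give, for $\mu$-a.e.\ $x$, $-\tfrac1n\log\mu([x_0\cdots x_{n-1}])\to h(\mu)$ and $-\tfrac1n S_n(tf)(x)\to-\int tf\,d\mu$, while the Gibbs property forces the two left-hand sides to differ by $O(1/n)$; hence $h(\mu)+\int tf\,d\mu=P=P_G(tf)$, so $\mu=\mu_{tf}$ is the (unique) equilibrium state.

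For the accumulation points it suffices to show that $\{\mu_{tf}:t\ge 2\}$ is uniformly tight, since then every sequence $t_n\to\infty$ has a weak-$*$ convergent subsequence whose limit is a probability measure. By the Gibbs property just obtained, $\mu_{tf}([b])\le C_t\,e^{t\sup_{[b]}f-P_G(tf)}$ with $C_t\le e^{At}$ for some $A$ depending only on $f$, and $P_G(tf)\ge t\,\bar f_O$ with $\bar f_O$ the $f$-average over the fixed orbit $O$; therefore $\mu_{tf}([b])\le e^{t q_b}$, where $q_b:=A+\sup_{[b]}f-\bar f_O$. Only finitely many $q_b$ are $\ge 0$, and for $t\ge 2$ and $q_b<0$ one has $e^{tq_b}\le e^{2q_b}$ with $\sum_b e^{2q_b}<\infty$; hence $\mu_{tf}([b])\le e^{2q_b}$ outside a fixed finite set, summably and uniformly in $t\ge 2$. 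Inserting this and $\sigma$-invariance into the previous construction of compact sets $\{x:x_i\in F_i\ \forall i\}$ gives the uniform tightness, and hence the existence of accumulation points of $(\mu_{tf})_{t>1}$ as $t\to\infty$.

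The main obstacle is showing that the limiting measure is genuinely an \emph{equilibrium} state rather than just an invariant one: this hinges on the pressure approximation $P_G(tf|_{\Sigma_N})\to P_G(tf)$ in the absence of the BIP property, on transferring the Gibbs bounds to the limit with a constant independent of $N$, and, most of all, on the integrability $-tf\in L^1(\mu)$ --- which is exactly the place where $t>1$ rather than $t\ge 1$ is indispensable and which may fail at $t=1$. The same integrability-type estimate, this time needed uniformly in $t$, is also the heart of the tightness argument used for the accumulation points.
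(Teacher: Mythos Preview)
Your argument has a genuine gap at the step where you assert that the weak limit $\mu$ is a Gibbs measure on all of $\Sigma$. The claim that the Gibbs constant on each compact $\Sigma_N$ is ``controlled solely by $V(tf)=tV(f)$'' holds for the \emph{upper} bound but fails for the \emph{lower} one: on a finite transitive subshift of finite type the lower Gibbs constant depends on the combinatorics of the shift (roughly, on how long it takes to connect states), and these constants blow up as $N\to\infty$ whenever $\Sigma$ lacks the BIP property. More decisively, if your argument were valid it would manufacture an invariant Gibbs measure on $\Sigma$, and by Sarig's characterization \cite{Sar03} this is equivalent to BIP in the mixing case --- exactly the hypothesis the theorem is meant to dispense with. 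Once Gibbsianness of $\mu$ fails, so does the Shannon--McMillan--Breiman\,/\,Birkhoff step (which needs the two-sided comparison $\log\mu([x_0\cdots x_{n-1}])=S_n(tf)(x)-nP+O(1)$), and your deduction of ergodicity from Gibbsianness becomes circular.

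The paper avoids this obstacle by never claiming the limit is Gibbs. It uses only the \emph{upper} Gibbs bound on $1$-cylinders (which \emph{is} uniform in $k$) to obtain tightness and the integrability $-tf\in L^1(\mu_t)$ --- these parts of your proof are fine and match the paper --- but then treats the entropy by a direct upper-semicontinuity argument: it shows $H(\mu_t\mid\alpha^{n_0})=\lim_l H(\mu_{tf_{k_l}}\mid\alpha^{n_0})$ along suitable $n_0$ and subsequences, deduces $\limsup_l h(\mu_{tf_{k_l}})\le h(\mu_t)$, and combines this with $\mu_{tf_{k_l}}(tf)\to\mu_t(tf)$ (proved via truncation, since $tf$ is unbounded below) and $P_{k_l}(t)\uparrow P(t)$ to obtain $P(t)\le h(\mu_t)+\mu_t(tf)$. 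This yields that $\mu_t$ is the equilibrium state without any lower Gibbs bound on $\Sigma$; uniqueness and ergodicity then follow a posteriori from \cite{BuSa03}.
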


The proof of this result is similar to one of the results in \cite{MaUr01}, that guarantees under similar conditions the existence of an eigenmeasure of the dual Ruelle operator. But it is not shown that in fact such eigenmeasure gives birth to an equilibrium state, as it is well known that is not always the case, in case of more interest see the example at the end of \cite{Sar03}.

In the case of Markov shifts with finite alphabet, the existence of accumulation points is trivial. For countable Markov shifts, Jenkinson, Mauldin and Urba\'nski in \cite{JMU05} gave conditions on the potential $f$ to guarantee the existence of accumulation points for the family $(\mu_{tf})_{t > 1}$, assuming that $\Sigma$ is a finitely primitive Markov shift with countable alphabet, and they also prove that these accumulation points are maximizing measures for $f$. Here, we do not focus on the maximizing property, since it is already known that if there exists an accumulation point for the equilibrium states as $t\to \infty$, then it is maximizing \cite{BMP15}. Again, we emphasize these results are in the context where there exists Gibbs measures.

We notice that existence of accumulation points does not imply the existence of the zero temperature limit of the equilibrium states, for which it is usually required another condition. One of the first works on the subject was done by Br\'emont in \cite{Bre03}, where it is proved the convergence when $f$ depends only on a finite number of coordinates, that is, $f$ is locally constant, and $\Sigma$ is a topologically transitive Markov shift with finite alphabet. More recently, Leplaideur in \cite{Lep05} gave an explicit form for the zero temperature limit when $\Sigma$ is a topologically Markov shift with finite alphabet\footnote{The proof uses an aperiodic incidence matrix, but it is essentially the same in the present context, as it can be seen also in the similar results in \cite{ChGU11}.}. Also, in \cite{ChH10}, following ideas in a different context in \cite{vER07}, it is given an example of a Lipschitz potential for which there is no convergence if we drop the requirement of locally constant. In fact, if we do not require the potential to be locally constant, the situation is much more complex, as it can be seen in  \cite{CRL14}, where given any two ergodic measures of same entropy, it is possible to construct a Lipschitz potential such that the equilibrium states family accumulates has both as accumulation points. In the case of Markov shifts with countable alphabet and the BIP property, the existence of the limit for locally constant potentials has been proved by Kempton \cite{Kem11}. The question whether the equilibrium states for a locally constant potential in the non BIP setting, that is, without the existence of Gibbs measures, is still open.

Also, under the same conditions of \cite{JMU05}, Morris has proved in \cite{Mor07} the existence of the limit $\lim_{t \to \infty} h(\mu_{tf})$ of the family of associated Kolmogorov-Sinai entropies and showed that this limit agree with the supremum of the entropies over the set of the maximizing measures of $f$. We are able to give an extension of this result in the same context of the previous theorem.

\begin{teor}\label{teor3} Let $\Sigma$ be a topologically transitive countable Markov shift and $f : \Sigma \to  \mathbb{R}$ a summable Markov potential such that $V(f) < \infty$ and $P_G(f)<\infty$. Then
\[h(\mu_{\infty}) = \limsup_{t \to \infty} h(\mu_{tf}) = \sup_{\mu \in \mathcal{M}_{max}(f)} h(\mu) \,,\]
where $\mu_{\infty}$ is an accumulation point of the family $(\mu_{tf})_{t>1}$. 
\end{teor}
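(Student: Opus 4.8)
The plan is to isolate the one new ingredient — that the entropy cannot drop in the limit — and to obtain everything else from the variational principle and Theorem~\ref{teor2}. Concretely, I would prove the two inequalities $\liminf_{t\to\infty}h(\mu_{tf})\ge\sup_{\nu\in\mathcal M_{\max}(f)}h(\nu)$ and $\limsup_{t\to\infty}h(\mu_{tf})\le h(\mu_\infty)$. Since $\mu_\infty\in\mathcal M_{\max}(f)$ by Theorem~\ref{teor2}, one has $h(\mu_\infty)\le\sup_{\nu\in\mathcal M_{\max}(f)}h(\nu)$, so the chain
\[
\sup_{\nu\in\mathcal M_{\max}(f)}h(\nu)\ \le\ \liminf_{t\to\infty}h(\mu_{tf})\ \le\ \limsup_{t\to\infty}h(\mu_{tf})\ \le\ h(\mu_\infty)\ \le\ \sup_{\nu\in\mathcal M_{\max}(f)}h(\nu)
\]
collapses and all four quantities coincide, which is the assertion. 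Writing $\beta:=\sup_\mu\int f\,d\mu=\int f\,d\mu_\infty$, the first inequality is immediate: by Theorem~\ref{teor1} each $\mu_{tf}$ is the equilibrium state for $tf$, so $h(\mu_{tf})+t\int f\,d\mu_{tf}=P_G(tf)\ge h(\nu)+t\beta$ for $\nu\in\mathcal M_{\max}(f)$, and since $\int f\,d\mu_{tf}\le\beta$ this gives $h(\mu_{tf})\ge h(\nu)$ for all $t>1$.

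Before the second inequality I would collect a few preliminaries. Pairing the variational inequality for $sf$ tested against $\mu_{tf}$ with that for $tf$ tested against $\mu_{sf}$ ($1<s<t$) shows that $t\mapsto\int f\,d\mu_{tf}$ is non-decreasing and $t\mapsto h(\mu_{tf})$ is non-increasing, so $\lim_{t\to\infty}h(\mu_{tf})$ exists. Also $h(\mu_{tf})\le H_1(\mu_{tf})$, where $H_1(\mu)=-\sum_a\mu[a]\log\mu[a]$ is the static entropy of the generating partition $\mathcal P$ into $1$-cylinders, and comparing $(\mu[a])_a$ with the probability vector $\bigl(e^{\sup_{[a]}f}/Z\bigr)_a$ (with $Z=\sum_a e^{\sup_{[a]}f}<\infty$ by summability) the Gibbs inequality gives $H_1(\mu)\le-\int f\,d\mu+\log Z$. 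As $\int f\,d\mu_{tf}$ is non-decreasing it is bounded below for $t\ge2$, hence $h(\mu_{tf})$ stays bounded; feeding this back into $P_G(tf)\ge h(\mu_\infty)+t\beta$ gives $\int f\,d\mu_{tf}\ge\beta-(h(\mu_{tf})-h(\mu_\infty))/t$, so together with $\int f\,d\mu_{tf}\le\beta$ one gets $\int f\,d\mu_{tf}\to\beta=\int f\,d\mu_\infty$. Since $f$ is bounded above (summability), this plus $\mu_{tf}\to\mu_\infty$ weak-$*$ yields $\int|f|\,d\mu_{tf}\to\int|f|\,d\mu_\infty<\infty$ and, using $V_1(f)<\infty$ so that $f$ is bounded on each cylinder, $\int_{\bigcup_{a>N}[a]}|f|\,d\mu_{tf}\to\int_{\bigcup_{a>N}[a]}|f|\,d\mu_\infty$ for every fixed $N$.

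For the upper bound I would truncate $\mathcal P$ to a finite partition. Fix $N$ and set $\mathcal P_N=\{[1],\dots,[N],\ \Sigma\setminus\bigcup_{a\le N}[a]\}$, a finite partition into clopen sets refined by $\mathcal P$. Since $H_1(\mu_{tf})<\infty$, Kolmogorov--Sinai gives $h(\mu_{tf})=h_{\mu_{tf}}(\mathcal P)$, and the standard bound $h_\mu(\mathcal P)\le h_\mu(\mathcal P_N)+H_\mu(\mathcal P\mid\mathcal P_N)$ together with $H_\mu(\mathcal P\mid\mathcal P_N)\le-\sum_{a>N}\mu[a]\log\mu[a]$ gives
\[
h(\mu_{tf})\ \le\ h_{\mu_{tf}}(\mathcal P_N)\ +\ \sum_{a>N}\bigl(-\mu_{tf}[a]\log\mu_{tf}[a]\bigr).
\]
The map $\mu\mapsto h_\mu(\mathcal P_N)$ is upper semicontinuous — it is the infimum over $n$ of the weak-$*$ continuous functions $\mu\mapsto\frac1n H_\mu\bigl(\bigvee_{i<n}\sigma^{-i}\mathcal P_N\bigr)$, the atoms all being clopen — so $\limsup_{t\to\infty}h_{\mu_{tf}}(\mathcal P_N)\le h_{\mu_\infty}(\mathcal P_N)\le h(\mu_\infty)$. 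For the tail term, the elementary inequality $-x\log x\le e^{c}-cx$ with $c=\sup_{[a]}f$ gives, once $N$ is large enough that $\sup_{[a]}f<0$ for $a>N$,
\[
\sum_{a>N}\bigl(-\mu_{tf}[a]\log\mu_{tf}[a]\bigr)\ \le\ \sum_{a>N}e^{\sup_{[a]}f}\ +\ \int_{\bigcup_{a>N}[a]}|f|\,d\mu_{tf},
\]
whose $\limsup$ as $t\to\infty$ is $\le\sum_{a>N}e^{\sup_{[a]}f}+\int_{\bigcup_{a>N}[a]}|f|\,d\mu_\infty$ by the preliminaries. Combining, $\limsup_{t\to\infty}h(\mu_{tf})\le h(\mu_\infty)+\sum_{a>N}e^{\sup_{[a]}f}+\int_{\bigcup_{a>N}[a]}|f|\,d\mu_\infty$ for every $N$; letting $N\to\infty$ the last two terms vanish (tail of $Z$, and $|f|\in L^1(\mu_\infty)$), so $\limsup_{t\to\infty}h(\mu_{tf})\le h(\mu_\infty)$.

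The step requiring care is the truncation: without Gibbs measures the entropy need not be upper semicontinuous on $\mathcal M(\Sigma)$, so one cannot pass to the limit in $h$ directly; replacing $\mathcal P$ by the finite partition $\mathcal P_N$ restores semicontinuity at the price of the error $-\sum_{a>N}\mu_{tf}[a]\log\mu_{tf}[a]$, and the whole argument rests on controlling that error — which is exactly where the summability of $f$ (making $Z$ finite) and the convergence $\int f\,d\mu_{tf}\to\int f\,d\mu_\infty$ (hence no escape of the $f$-mass, hence no escape of entropy, to infinity) are used. The Markov hypothesis on $f$ itself enters only through Theorem~\ref{teor2}, i.e.\ to guarantee that the limit $\mu_\infty$ exists.
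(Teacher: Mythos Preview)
Your argument is correct and follows a genuinely different route from the paper. The paper's proof of Theorem~\ref{teor3} works indirectly through the compact approximation: it invokes Morris's result \cite{Mor07} on each finite-alphabet subshift $\Sigma_k$ (this is where the \emph{mixing} hypothesis enters) to obtain $h(\mu_\infty)=\lim_{t\to\infty}h(\mu_{tf_k})=\sup_{\mu\in\mathcal M_{\max}(f)}h(\mu)$ for $k\ge k_0$, then uses Lemma~\ref{lema_lim_ut} and \eqref{VPApproximation} to show $h(\mu_{tf_{k_l}})\to h(\mu_{tf})$ along a suitable subsequence, and closes with an $\epsilon/2$ diagonal argument. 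Your proof bypasses both the compact exhaustion and the appeal to \cite{Mor07}: the lower bound $h(\mu_{tf})\ge h(\nu)$ comes straight from the variational principle, and the upper bound is obtained by truncating the generating partition to the finite clopen partition $\mathcal P_N$, where upper semicontinuity is automatic, and then controlling the residual entropy $-\sum_{a>N}\mu_{tf}[a]\log\mu_{tf}[a]$ via summability of $f$ and the convergence $\int f\,d\mu_{tf}\to\beta$. This buys you two things the paper's proof does not: the argument is self-contained (no black-box use of Morris), and it never uses topological mixing---transitivity suffices---so your proof actually yields a slightly stronger statement than Theorem~\ref{teor3} as written. You are also right that the Markov hypothesis is used only through Theorem~\ref{teor2}; in fact, running your upper-bound argument along a convergent subsequence $t_n\to\infty$ (supplied by Theorem~\ref{teor1}) shows that $\lim_{t\to\infty}h(\mu_{tf})=\sup_{\nu\in\mathcal M_{\max}(f)}h(\nu)$ holds for any summable potential with $V(f)<\infty$ and $P_G(f)<\infty$, without assuming $f$ is Markov.
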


As we have stated, the main development is that we do not depend on the existence of Gibbs measures for the whole space, since we use the condition that $f$ is a summable potential to guarantee the existence of the equilibrium state $\mu_{tf}$ for each $t > 1$. The best of our knowledge, this is the first proof of the convergence in the zero temperature limit beyond the finitely primitive case. 

Our proofs are based on an elaborate construction similar to a diagonal argument. We approximate our countable Markov shift by compact invariant subshifts $\Sigma_k$ and use the results in  \cite{BiFr14} to locate the ground states on a well determined compact subshift $\Sigma_{k_0}$. Then, as $k \to \infty$, we use the fact that the potential is summable and the variational principle, through a fine control of the entropy, to show that there exists equilibrium states for each $t>1$ and their accumulation points as $t\to\infty$. As usual, this is not as simple as it might seem at a first look, in particular since $\Sigma$ is not $\sigma$-compact, and also since in this general setting we lose some important tools in classical thermodynamic formalism, as the Gibbsianess of the equilibrium states and some strong properties on the Ruelle operator. We give some more detail on this on the next section.

The paper is organized as follows. In the next section, we give the basic definitions and notations that we use in the proofs and results, as well more details on the setting we are working on. In section 3 we construct the basic approximation by compacts that we have to deal with to prove our results. In section 4, we use the existence of the equilibrium states on the compact case to show the existence of an unique equilibrium state in the case of countable Markov shifts, therefore proving theorem 1. It is in section 4 that most of our main hypothesis show their strength, since we have to make careful estimates to control the entropy and prove the existence of an equilibrium state. In section 5, we use the existence of accumulation points in zero temperature to show the existence of the entropy zero temperature limit, therefore proving theorem 2.

\section{Preliminaries}

Let $\mathcal{A}$ be a countable alphabet and $\textbf{M}$ a matrix of zeros and ones indexed by $\mathcal{A} \times \mathcal{A}$. Let $\Sigma$ be a topological Markov shift on the alphabet $\mathcal{A}$ with incidence matrix $\textbf{M}$, that is
\[
\Sigma = \{x \in \mathcal{A}^{\mathbb{N} \cup \{0\}} : \textbf{M}_{x_i, x_{i+1}} = 1\} \,,
\]
 where the dynamics is given by the shift map  $\sigma : \Sigma \to \Sigma$, that is the map defined by $\sigma((x_n)_{n \geq 0}) = (x_n)_{n \geq 1}$. Just to ease the calculations we suppose that $\mathcal{A} = \mathbb{N}$. Recall that a word $\omega$ is admissible if $\omega$ appears in $x \in \Sigma$. It is well known that $\Sigma$ is a metric space with topology compatible to the product topology. Also, the topology has a sub-base made by cylinders that are both open and closed sets defined as
 \[
 [\omega] = \{x \in \Sigma: x \textrm{ starts with the word } \omega \}\,.
 \]

From now on, we assume that $\Sigma$ is topologically transitive in the sense that $\sigma$ is topologically transitive.  It is well known that this is equivalent to $\textbf{M}$ being irreducible, which is also equivalent to the fact that given any symbols $i, j \in \mathcal{A}$ there is an admissible word $\omega$ such that $i \omega j$ is also admissible.
 
 Fix a potential $f : \Sigma \to \mathbb{R}$, for each $n \geq 1$ we define the $n$-th variation of $f$ as
\[
V_n(f) = \sup\{|f(x) - f(y)| : x_0 \ldots x_{n-1} = y_0 \ldots y_{n-1}\} \,.
\]
We say that $f$ has summable (or bounded) variations if 
$$V(f) = \sum_{n \in \mathbb{N}} V_n(f) < \infty \,.$$

 We say that $f$ is coercive if $\lim_{i \to \infty} \sup f|_{[i]} = -\infty$, 
 and that $f$ is summable if satisfies 
\begin{equation}
\sum_{i \in \mathbb{N}} \exp(\sup (f|_{[i]})) < \infty \,.
\label{eqsum}
\end{equation}
Observe that if $f$ is summable then $f$ coercive. Moreover, it is shown in \cite{Mor07}\footnote{One can easily notice the BIP property is not required for this.} that the summability condition implies, for any $t > 1$, that
\begin{equation}
\sum_{i \in \mathbb{N}} \sup (-tf|_{[i]})\exp(\sup (tf|_{[i]})) < \infty \,.
\label{eqsumt}
\end{equation}
It follows from \cite{BiFr14} that the summability condition allow us to guarantee existence of maximizing measures for the potential $f$, since it is coercive. Also, it is the key ingredient to prove in this paper the  existence of the equilibrium state associated to $tf$ for each $t > 1$. 

From now on, we assume that $V(f) < \infty$ and that $f$ is summable. Observe that in this case $f$ is uniformly continuous and bounded above.

We use the following notation 
\[
\mu(f) := \int f d\mu\,.
\] 

Define
\begin{equation}
\beta = \sup \{\mu(f) : \mu \in \mathcal{M}_{\sigma}(\Sigma) \} \,,
\label{eqsup}
\end{equation}
where $\mathcal{M}_{\sigma}(\Sigma)$ is the set of $\sigma$-invariant borel  probability measures on $\Sigma$. When $\mu(f) = \beta$, we say that $\mu$ is a maximizing measure, and denote the set of maximizing measures by $\mathcal{M}_{\max}(f)$. For each $a \in \mathcal{A}$, let
\[
Z_n(f,a) = \sum_{\sigma^n x = x} \exp(S_n f(x)) 1_{[a]}(x) \,,
\]
with $S_n f(x) = \sum^{n-1}_{i = 0} f(\sigma^i(x))$. Then, the Gurevic pressure of $f$ is defined as
\[
P_G(f) = \lim_{n \to \infty} \frac{1}{n} \log Z_n(f,a) \,.
\]
Since $\Sigma$ is topologically transitive, the above pressure definition is independent from the choice of $a \in \mathcal{A}$,  furthermore $-\infty < P_G(f) \leq \infty$ and satisfies the variational principle (See \cite{Sar99} and \cite{FFM02})
\begin{equation}\label{VP}
P_G(f) = \sup \{ h(\mu) + \mu(f) : \mu \in \mathcal{M}_{\sigma}(\Sigma) \textrm{ and } \mu(f) >  - \infty \} \,.
\end{equation}
So, in this case we have that the Gurevic pressure is the same as the topological pressure, and then the hypothesis that require finite pressure can be read in any way. 

Also, $P_G(f)$ satisfies the another variational principle (see \cite{Sar99} and \cite{FFM02})
\begin{equation}\label{VPCompact}
P_G(f) = \sup \{ P_G(f|_{\Sigma^\prime}) : \Sigma^\prime \textrm { is a  compact subshift of } \Sigma \} \,.
\end{equation}

A measure $\mu \in \mathcal{M}_{\sigma}(\Sigma)$ is called an equilibrium state associated to $f$ when $h(\mu) + \mu(f)$ is well defined and reaches the supremum in \eqref{VP}, that is
\[
P_G(f) = h(\mu) + \mu(f) \,.
\]

We say that a measure $\mu \in \mathcal{M}_{\sigma}(\Sigma)$ is an invariant  Gibbs state associated to $f$ if there is a constant $C > 1$ such that for any $x \in \Sigma$ and each $n \geq 1$
\begin{equation} \label{gibbs}
C^{-1} \leq \frac{\mu[x_0 \ldots x_{n-1}]}{\exp(S_n f(x) - nP_G(f))} \leq C \,.
\end{equation}

When $\Sigma$ is compact the equilibrium states and the invariant Gibbs states are unique and agree, see, for example,  \cite{MaUr01}. Furthermore, when $\Sigma$ is compact and the potential $f$ has summable variations, we can choose 
\begin{equation} \label{C2}
C = \exp(4V(f)) \,. 
\end{equation}

Recall that the Ruelle operator $L_f$ associated to $f$ is defined as  
\[
(L_f g)(x) = \sum_{\sigma y = x} \exp(f(y))g(y) \,,
\]
and since $f$ is summable, $P_G(f) <\infty$ and $V(f) < \infty$, $L_f$ is well defined. Also, one can look into the dual operator $L^*_f$ defined as  
\[
(L^*_f \mu)(g) = \mu(L_f g) \,,
\]
with $g : \Sigma \to \mathbb{C}$. These operators and its properties are the base for the following well known results.

It is proved in \cite{BuSa03} the uniqueness of the equilibrium state, whenever they exist, assuming $\Sigma$ is a topologically transitive countable Markov shift and $f$ is bounded above with summable variation and $P_G(f) < \infty$. In this case, we will denote by $\mu_f$ the unique equilibrium state associated to $f$.

Furthermore, it is proved in \cite{BuSa03} that if the equilibrium state exists, then $$\mu_f = hd\nu$$ where $h$ is the eigenfunction of $L_f$ associated to the eigenvalue $\lambda = e^{P_G(f)}$ and $\nu$ is the eigenmeasure of $L^*_f$ associated to the same eigenvalue. In \cite{MaUr01}, it is proved that assuming $f$ is a summable potential, then there exists an eigenmeasure $\nu$ for $L^*_f$, but it is not proved that it will, in fact, result in an equilibrium state. 

In this paper, we take a different approach. Instead of finding an eigenmeasure to $L^*_f$ and working out its properties to prove it is in fact an equilibrium state under our hypothesis, we show that the equilibrium states on a suitable sequence of compact Markov shifts accumulate on a measure that is, in fact, an equilibrium state on the countable Markov shift, by the argument we have sketched in the previous section. This approach gives us some minor benefits, in particular the fact that we can easily realize that the equilibrium states family is tight.

We can suppose, w.l.o.g. that $f \leq 0$, since $f$ is bounded above by the fact that it is  coercive and $V(f)< \infty$, so we can consider $ f - \sup f$ instead of $f$.

\section{Compact subshifts approximation}

We suppose that $f$ is a summable potential such that $P_G(f) < \infty$ and $V(f) < \infty$ from now on. Our aim is to prove the existence of equilibrium states for $tf$ and $t>1$ and accumulation points for such sequence of equilibrium states as we approach the zero temperature limit on topologically transitive countable Markov shifts. We accomplish this using an approximation by compact subshifts of $\Sigma$ and its Gibbs equilibrium states in each compact subspace.

We can choose a sequence $(\Sigma_k)_{k \in \mathbb{N}}$ of compact topologically transitive subshifts of $\Sigma$ such that for any $k \in \mathbb{N}$ we have $\Sigma_k \subsetneq \Sigma$ and $\Sigma_k \subsetneq \Sigma_{k+1}$ and such that the variational principle \eqref{VPCompact} can be resumed to
\begin{equation} \label{VPSigmas}
P_G(f) = \sup \{ P_G(f|_{\Sigma_k}) : k \in \mathbb{N} \} \,.
\end{equation} 

In fact, for each $k \in \mathbb{N}$ we can choose $\mathcal{A}_k := \{0, \ldots, m_k\} \cup \{\text{finite elements}\}$, where $m_k$ is a strictly increasing sequence in $\mathbb{N}$ and the choice of finite elements, which depends on $k$, is made to allow to connect any of the symbols in $\{0, \ldots, m_k\}$. It is always possible to choose such finite alphabet since $\Sigma$ is topologically transitive. We can also choose $m_k = \max\{j : j \in \mathcal{A}_{k-1}\} + 1$ for $k \geq 1$, which assures that $\mathcal{A}_k \subset \mathcal{A}_{k+1}$ and $\bigcup_{k\in\mathbb{N}}\mathcal{A}_k = \mathcal{A}$. This construction is classical and appears, for example, in \cite{MaUr03}.

It is not difficult to show that for all $k$ we have $\Sigma_k \subset \Sigma$, $\sigma|_{\Sigma_k}$ is also topologically transitive and that this construction satisfies \eqref{VPSigmas}, since any compact subshift of $\Sigma$ is contained in some $\Sigma_k$ for large $k$.

Also, notice that our proof bellow works mainly because \eqref{VPSigmas} is true in this sequence, and it does not require that $\Sigma$ is in fact decomposed into this sequence, which would be impossible, since $\Sigma$ is not $\sigma$-compact.

Let $f_k := f|_{\Sigma_k} : \Sigma_k \to \mathbb{R}$ the restriction of $f$ to $\Sigma_k$ and 
\[
\beta_k := \sup \{\mu(f_k) : \mu \in \mathcal{M}_{\sigma}(\Sigma_k) \} \,.
\]
Since $f$ is summable, we have that $f$ is coercive. Therefore, the main theorem in \cite{BiFr14} says that there is a finite set $F \subset \mathcal{A}$ such that \eqref{eqsup} becomes 
\[
\beta = \sup \{ \mu(f) : \mu \in \mathcal{M}_{\sigma}(\Sigma_F)\} \,,
\]
where $\Sigma_F$ is the restriction of $\Sigma$ to the alphabet $F$, and since $\Sigma_F$ is compact then $\mathcal{M}_{max}(f) \neq \emptyset$. Moreover, it also implies that for any $\mu \in \mathcal{M}_{max}(f)$ we have $supp(\mu) \subset \Sigma_F$. 
 
Denote by $P : [1, \infty)  \to \mathbb{R}$ the function $t \mapsto P(t) = P_G(tf) < \infty$, and consider the sequence $(P_k)_{k \in \mathbb{N}}$ such that $P_k : [1, \infty)  \to \mathbb{R}$ is the function $t \mapsto P_k(t) = P_G(tf_k)$. Since for each $k \in \mathbb{N}$ 
\[
\sum_{\sigma^n x = x}\exp(S_n tf(x)) 1_{[a]}(x) \geq \sum_{\sigma^n x = x}\exp(S_n tf(x)) 1_{[a]}(x) 1_{\Sigma_k}(x) \,,
\]
then for any $t \geq 1$ we have $P_k(t) \leq P_{k+1}(t) \leq P(t)$, and it follows from \eqref{VPSigmas} that
\begin{equation} \label{VPApproximation}
P(t) = \sup \{P_k(t) : k \in \mathbb{N} \} \,.
\end{equation}

 Bellow we show that the sequence $(\mu_{tf_k})_{k \in \mathbb{N}}$ in $\mathcal{M}_{\sigma}(\Sigma)$ has a convergent subsequence. For this, we  prove that this sequence is tight. Let us recall that a subset $\mathcal{K} \subset \mathcal{M}_{\sigma}(\Sigma)$ is tight if for every $\epsilon > 0$ there is a compact set $K \subset \Sigma$ such that $\mu(K^c) < \epsilon$ for any $\mu \in \mathcal{K}$

\begin{lema}\label{lematight} For each $t > 1$ the equilibrium states sequence $(\mu_{tf_k})_{k \in \mathbb{N}}$ is tight.
\end{lema}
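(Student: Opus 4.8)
The plan is to get a uniform tail estimate on the measures $\mu_{tf_k}$ that is independent of $k$. Fix $t>1$. Each $\mu_{tf_k}$ is the (unique) equilibrium state of the potential $tf_k$ on the compact subshift $\Sigma_k$, hence it is also the invariant Gibbs state for $tf_k$ with Gibbs constant $C_k=\exp(4V(tf_k))\le \exp(4tV(f))=:C$, by \eqref{C2}; note this bound does not depend on $k$. So for every $k$, every cylinder $[i]$ with $i\in\mathcal A_k$, and using $n=1$ in \eqref{gibbs},
\[
\mu_{tf_k}[i]\le C\exp\big(\sup(tf|_{[i]}) - P_k(t)\big)\le C\, e^{-P_k(t)}\exp\big(\sup(tf|_{[i]})\big).
\]
Since $P_k(t)\uparrow P(t)$ by \eqref{VPApproximation}, for $k$ large we have $P_k(t)\ge P(1)-1$ (say), so $e^{-P_k(t)}$ is uniformly bounded; absorb this into a single constant $C'=C'(t)$ so that $\mu_{tf_k}[i]\le C'\exp(\sup(tf|_{[i]}))$ for all large $k$ and all $i\in\mathcal A_k$. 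For the finitely many small $k$ the measures live on fixed compacts, so they pose no problem for tightness; the real content is the uniform bound just displayed.

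Now I would use summability. Since $f$ is summable, \eqref{eqsum} gives $\sum_{i}\exp(\sup f|_{[i]})<\infty$, and because $t>1$ and $f\le 0$ (which we assumed w.l.o.g.) we have $\sup(tf|_{[i]})\le \sup(f|_{[i]})$, so $\sum_i \exp(\sup(tf|_{[i]}))<\infty$ as well. Given $\epsilon>0$, pick $N$ so that $C'\sum_{i>N}\exp(\sup(tf|_{[i]}))<\epsilon$. The set $K=\{x\in\Sigma: x_0\le N\}$ is the union of the finitely many cylinders $[0],\dots,[N]$, and these are compact: a cylinder $[i]$ in $\Sigma$ is not compact in general, but we only need a compact set containing the relevant mass. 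To be safe, instead take $K_{N,M}$ to be the set of points whose symbol at coordinate $j$ is $\le M_j$ for a suitable sequence $M_j\to\infty$, chosen so that the remaining mass at each coordinate is $<\epsilon 2^{-j}$; the bound $\mu_{tf_k}\{x:x_j>M\}=\mu_{tf_k}\{\sigma^j x: (\sigma^j x)_0>M\}\le C'\sum_{i>M}\exp(\sup(tf|_{[i]}))$ follows from $\sigma$-invariance together with the same Gibbs estimate applied after $j$ shifts. Then $K_{N,M}$ is compact (closed subset of a product of finite sets in the relevant coordinates, intersected with $\Sigma$), and $\mu_{tf_k}(K_{N,M}^c)<\epsilon$ uniformly in $k$ large.

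The main obstacle is the uniformity in $k$ of the Gibbs/tail estimate: one must check that the Gibbs constant $\exp(4V(tf_k))$ and the normalizing factor $e^{-P_k(t)}$ are controlled uniformly, which is exactly where finiteness of $V(f)$ and of $P_G(f)$ (via \eqref{VPApproximation}) enter, and that the summability bound \eqref{eqsum} transfers to $tf$ for $t>1$ (already recorded in the preliminaries, and using $f\le 0$). A secondary point of care is producing an honest compact set in $\Sigma$ rather than just a finite union of cylinders, handled by the coordinate-wise truncation $K_{N,M}$ above together with $\sigma$-invariance of the $\mu_{tf_k}$. Once these are in place, tightness is immediate.
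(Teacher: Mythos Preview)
Your argument is correct and follows the same skeleton as the paper's proof: the uniform-in-$k$ Gibbs bound on one-cylinders via \eqref{C2}, summability of $f$, and a coordinate-wise truncation together with $\sigma$-invariance to produce an honest compact set. The one substantive difference is in how the factor $e^{-P_k(t)}$ is controlled. You absorb it into a constant $C'(t)$ depending on $t$ (using monotonicity of $P_k(t)$ in $k$; note that $P_k(t)\ge P_0(t)>-\infty$ already gives this, your stated bound ``$P_k(t)\ge P(1)-1$'' is not quite the right comparison). This is perfectly adequate for the lemma as stated. The paper instead fixes a periodic-orbit measure $\mu$, sets $S_k=\mu(f_k)$, uses the variational principle to get $P_k(t)\ge tS_k$, and after rearranging obtains the bound \eqref{ut_is_tight}, namely $\mu_{tf_k}[i]\le\exp(4V(f)+\sup f|_{[i]}-S^{\inf})$, which is \emph{independent of $t$}. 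That extra sharpness is not needed for Lemma~\ref{lematight} itself, but the paper reuses it verbatim at the end of the proof of Theorem~\ref{teor1} to conclude that the family $(\mu_{tf})_{t>1}$ is tight as $t\to\infty$; your $t$-dependent constant would not give that step directly.
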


\begin{proof} Our proof is similar to the proof in \cite{JMU05}. Let $\epsilon > 0$ and
\[
K = \{x \in \Sigma : 1 \leq x_m \leq n_{m} \text{ for each $m \in \mathbb{N}$}\} \,,
\]
where $(n_m)_{m \in \mathbb{N}}$ in $\mathbb{N}$ is an increasing sequence.
Then the set $K$ is compact in the product topology and satisfies
\begin{align}
\mu_{tf_k}(K^c)
                       &= \mu_{tf_k}(\bigcup_{m \in \mathbb{N}} \{x \in \Sigma : x_m > n_m\})\nonumber \\
                       &\leq \sum_{m \in \mathbb{N}} \sum_{i > n_m} \mu_{tf_k}(\{x \in \Sigma : x_m = i\})\label{eqcom1} \\
                       &= \sum_{m \in \mathbb{N}} \sum_{i > n_m} \mu_{tf_k}([i]) \,.\nonumber 
\end{align}
We choose $(n_m)_{m \in \mathbb{N}}$ such that	
\[
\sum_{i > n_m} \mu_{tf_k}([i]) < \frac{\epsilon}{2^{m + 1}} \,.
\]
Let $\mu \in \mathcal{M}_{\sigma}(\Sigma)$ such that $S = \mu(f)$ satisfies $-\infty < S < \infty$. It is sufficient to choose $\mu = \frac{1}{p}\sum^{p - 1}_{j = 0} \delta_{\sigma^j \bar{x}}$ with $\bar{x} \in Per_p(\Sigma_0)$. Notice that since $\Sigma_0 \subset \Sigma_k$ for all $k \in \mathbb{N}$, we can consider $\mu$ to be a measure well defined both in $\Sigma$ or in $\Sigma_k$ for any $k$. Let $S_k = \mu(f_k)$ and by the previous comment, we have that $S_k$ is well defined, and it is also clear that $S_k = S$ for any $k \in \mathbb{N}$, since $\mu(f)$ is the ergodic average of $f$ over a periodic orbit in $\Sigma_0$ and $f$ and $f_k$ are the same in $\Sigma_0$ for any $k$. Furthermore, we have that
\begin{equation}
P_k(t) - tS_k = P_G(t(f_k - S_k)) \geq h(\mu) + t(\mu(f_k) - S_k) = h(\mu) \geq 0 \,.
\label{eqcom2}
\end{equation}

Since each $\Sigma_k$ is compact, from \eqref{gibbs}, \eqref{C2} and the fact that $\exp(4V(tf_k)) \leq \exp(4tV(f)) < \infty$, then for all $x \in [i]$ we have that
\begin{equation}\label{gibbs_in_the_1_cylinders}
\exp(-4V(tf)) \leq \frac{\mu_{tf_k}[i]}{\exp(tf_k(x) - P_k(t))} \leq \exp(4V(tf)) \,.
\end{equation}

Recall that $S_k=S$ for all $k$ and by \eqref{eqcom2}, we obtain that for each $x \in [i]$
\begin{align*}
\mu_{tf_k}([i]) &\leq \exp(4tV(f) + tf_k(x) - P_k(t)) \\
								&\leq \exp(4tV(f) + t\sup f |_{[i]} - P_k(t)) \\
                &= \exp(t(4V(f) + \sup f |_{[i]} - S_k)) \exp(tS_k- P_k(t)) \\
                &\leq \exp(t(4V(f) + \sup f |_{[i]} - S)) \,. \\                
\end{align*}
Since the potential $f$ is coercive, then for $i$ large enough we have the inequality $4V(f) + \sup f |_{[i]} - S \leq 0$, and since $t > 1$ we have to
\begin{equation} \label{ut_is_tight}
\mu_{tf_k}([i]) \leq \exp(4V(f) + \sup f |_{[i]} - S) \,.
\end{equation}

Moreover, from the summability condition \eqref{eqsum} we can suppose that the sequence $(n_m)_{m \in \mathbb{N}}$ satisfies
\[
\sum_{i > n_m} \exp(\sup f |_{[i]}) < \frac{\epsilon}{2^{m + 1}} \exp(S - 4V(f)) \,.
\]
Therefore, for any $k$
\begin{equation}
\sum_{i > n_m} \mu_{tf_k}([i]) \leq \sum_{i > n_m} \exp(4V(f) + \sup f |_{[i]} - S) < \frac{\epsilon}{2^{m + 1}} \,.
\label{eqcom3}
\end{equation}
 From \eqref{eqcom1} and \eqref{eqcom3}, we conclude that					
\[
\mu_{tf_k}(K^c) < \sum_{m \in \mathbb{N}} \frac{\epsilon}{2^{m + 1}} = \epsilon \,.
\]
\end{proof}

\begin{remark}\label{rem1}
The last passage in the proof of lemma \ref{lematight} is the key point where the summability condition is in fact needed. It is not the only point where the summability condition is essential, but it would be interesting to know if it is possible to use a different argument assuming only that $f$ is coercive, even if we need some additional regularity on $f$.
\end{remark}

\section{Proof of theorem 1}

In this section we prove our first theorem, which is a consequence of the summability condition \eqref{eqsum}, the finiteness of the pressure and the tightness of the sequence $(\mu_{tf_k})_{k \in \mathbb{N}}$. 

By the \emph{Prohorov's theorem}, there exists a subsequence $(\mu_{tf_{k_m}})_{m \in \mathbb{N}}$ of the sequence $(\mu_{tf_k})_{k \in \mathbb{N}}$ and a measure $\mu_t \in \mathcal{M}_{\sigma}(\Sigma)$ such that 
\[
\mu_t = \lim_{m \to \infty} \mu_{tf_{k_m}}\,.\]

Our aim is  prove that $\mu_t$ is, indeed, an equilibrium state associated to the potential $tf$. For this purpose, we have to estimate both $\mu_t(tf)$ and $h(\mu_t)$ to prove they are finite, so that $h(\mu_t) + \mu_t(tf)$ is well defined and then we can finally verify that they are also maximal. We begin with $\mu_t(tf)$.

From \eqref{gibbs_in_the_1_cylinders}, for each $m \in \mathbb{N}$ we have 
\[
\mu_{tf_{k_m}}[i] \leq \exp(4tV(f) + \sup(tf|_{[i]}) - P_{0}(t)) \,,
\]
and notice that for each $i \geq 1$ we have $\partial [i] = \emptyset$, where $\partial [i]$ the topological boundary of the cylinder $[i]$. Therefore, the cylinder $[i]$ is a continuity set of $\mu_t$, and taking the limit as $m \to \infty$ we obtain 
\[
\mu_t[i] \leq \exp(4tV(f) + \sup(tf|_{[i]}) - P_{0}(t)) \,.
\]

Then, for any $t > 1$, we have
\begin{align}
\mu_t(-tf) &= \mu_t \left(\sum_{i \in \mathbb{N}} -tf|_{[i]} \right)\nonumber \\
					&\leq \sum_{i \in \mathbb{N}} \sup(-tf|_{[i]}) \mu_t[i] \label{eq1thm1}\\
					&\leq C_t \sum_{i \in \mathbb{N}} \sup(-tf|_{[i]}) \exp(\sup(tf|_{[i]})) < \infty \,. \nonumber
\end{align}

So $\mu_t(tf)$ is finite for any $t>1$ and in fact we can approximate it well through our compact sequence, as the following lemma shows. 

\begin{lema} \label{lema_lim_ut}For each $t > 1$ we have that $\mu_t(tf) = \lim_{m \to \infty} \mu_{tf_{k_m}}(tf)$.
\end{lema}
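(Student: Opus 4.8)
The plan is to show convergence of the integrals $\mu_{tf_{k_m}}(tf) \to \mu_t(tf)$ by splitting the potential into a part supported on a large finite set of cylinders, where weak-$*$ convergence applies directly, and a tail part, where the uniform Gibbs-type bound \eqref{gibbs_in_the_1_cylinders} together with the summability condition \eqref{eqsumt} provides a uniform (in $k$) smallness estimate. Concretely, fix $t>1$ and $\varepsilon>0$. Since $f$ is bounded above we may assume $f\le 0$, so all the integrands $-tf|_{[i]}$ are nonnegative, which lets me control everything by absolute values without sign cancellation issues.

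First I would write $-tf = \sum_{i\in\mathbb{N}} (-tf)\mathbf{1}_{[i]}$ and, for $N\in\mathbb{N}$, set $g_N = \sum_{i\le N}(-tf)\mathbf{1}_{[i]}$ and $r_N = \sum_{i>N}(-tf)\mathbf{1}_{[i]}$, so $-tf = g_N + r_N$ pointwise. The function $g_N$ is bounded (it is $-tf$ on the compact-ish union $\bigcup_{i\le N}[i]$ restricted there, and $0$ elsewhere) and continuous on $\Sigma$ since $f$ is uniformly continuous and each $[i]$ is clopen; hence by weak-$*$ convergence $\mu_{tf_{k_m}}(g_N)\to\mu_t(g_N)$ as $m\to\infty$. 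Actually some care is needed: $g_N$ is bounded but $-tf|_{[i]}$ need not be — however on each $[i]$ with $i\le N$, while $-tf$ is unbounded in general on a single cylinder, we do have $g_N$ bounded because... hmm, in fact $-tf$ restricted to $[i]$ is $\le \sup(-tf|_{[i]})$ which is finite by coercivity, so $g_N \le t\max_{i\le N}\sup(-f|_{[i]}) < \infty$ and $g_N$ is bounded continuous. Good. For the tail, I use exactly the estimate from \eqref{eq1thm1}: for every $k$ (and for $\mu_t$ itself),
\[
\mu_{tf_{k_m}}(r_N) = \sum_{i>N}\mu_{tf_{k_m}}\big((-tf)\mathbf{1}_{[i]}\big) \le \sum_{i>N}\sup(-tf|_{[i]})\,\mu_{tf_{k_m}}[i] \le C_t\sum_{i>N}\sup(-tf|_{[i]})\exp(\sup(tf|_{[i]})),
\]
and by \eqref{eqsumt} the right-hand side is the tail of a convergent series, so it is $<\varepsilon$ for $N$ large, uniformly in $m$; the same bound with $\mu_t$ in place of $\mu_{tf_{k_m}}$ (using the cylinder estimate $\mu_t[i]\le C_t\exp(\sup(tf|_{[i]}))$ derived just above the lemma) gives $\mu_t(r_N)<\varepsilon$.

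Combining: $|\mu_{tf_{k_m}}(-tf) - \mu_t(-tf)| \le |\mu_{tf_{k_m}}(g_N)-\mu_t(g_N)| + \mu_{tf_{k_m}}(r_N) + \mu_t(r_N)$. Choose $N$ so the last two terms are each $<\varepsilon$ uniformly in $m$, then let $m\to\infty$ to kill the first term; since $\varepsilon$ was arbitrary, $\mu_{tf_{k_m}}(-tf)\to\mu_t(-tf)$, equivalently $\mu_{tf_{k_m}}(tf)\to\mu_t(tf)$. The one subtlety worth spelling out is the interchange of sum and integral defining $\mu_{tf_{k_m}}(g_N)$ and $\mu_t(r_N)$, which is legitimate by monotone convergence since all summands $(-tf)\mathbf{1}_{[i]}$ are nonnegative. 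The main obstacle is making sure the tail bound is genuinely uniform in $k$ — this is where \eqref{eqsumt} (hence the summability of $f$) and the uniform Gibbs constant $C_t = \exp(4tV(f))$, which does not depend on $k$ because $V(f_k)\le V(f)$, are both essential; without uniformity in $k$ one could not push $m\to\infty$ after fixing $N$.
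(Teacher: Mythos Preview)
Your proof is correct and follows essentially the same strategy as the paper's: split $tf$ into a bounded main part and a tail, handle the main part by weak-$*$ convergence, and control the tail uniformly in $m$ via the cylinder bound \eqref{gibbs_in_the_1_cylinders} together with \eqref{eqsumt}. The only differences are cosmetic---the paper truncates by value, setting $f^N=\max(f,-N)$, and then observes that for large $N$ this truncation agrees with $f$ on low-index cylinders, whereas you truncate directly by cylinder index; note, however, that the finiteness of $\sup(-tf|_{[i]})$ for fixed $i$ comes from $V_1(f)<\infty$ (bounded oscillation on each cylinder), not from coercivity as you wrote.
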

\begin{proof} Observe that this result is not a direct consequence of the weak* convergence of the sequence $(\mu_{tf_{k_m}})_{m \in \mathbb{N}}$ since the potential $tf$ is not bounded bellow.

For each $N \in \mathbb{N}$, denote by $f^{N}:\Sigma \to \mathbb{R}$ the function
\[
f^N(x) :=
\begin{cases}
f(x) & \textrm{ if } f(x) \ge -N \\
-N & \textrm{ otherwise}
\end{cases}
\]
and set $g^{N}:\Sigma \to \mathbb{R}$ such that $f = f^N + g^N$.

Notice that both $f^N$ and $g^N$ are continuous, and that $f^N$ is bounded. Also, it is easy to verify that $|g^N(x)| \le |f(x)|$ and $g^N(x) \le 0$ for all $x \in \Sigma$. 

First, let us show that given $\epsilon > 0$, there is $N_0 \in \mathbb{N}$ such that $|\mu_{tf_{k_m}}(t g^N)| < \frac{\epsilon}{4}$ and $|\mu_{t}(t g^N)| < \frac{\epsilon}{4}$ for all $N \geq N_0$ and any $m$. 

We prove it for $\mu_t$ since for $\mu_{t f_{k_m}}$ it is similar and, in fact, it works with the same $N_0$. 

Given $n_0 \in \mathbb{N}$, since $f$ is coercive and $V(f)<\infty$ we can choose $N_0=N_0(n_0)$ large enough such that $g^N\vert_{[i]}=0$ if $i \le n_0$, for all $N \ge N_0$. In this way, we can proceed as in \eqref{eq1thm1} and we get 
\begin{align*}
|\mu_{t}(tg^N)| = \mu_t(-tg^N) &= \mu_t \left(\sum_{i > n_0} -tg^N|_{[i]} \right) \\
					&\leq \sum_{i > n_0} \sup(-tg^N|_{[i]}) \mu_t[i] \\
					&\leq C_t \sum_{i > n_0} \sup(-tg^N|_{[i]}) \exp(\sup(tf|_{[i]})) \\
 & \leq C_t \sum_{i > n_0} \sup(-tf|_{[i]}) \exp(\sup(tf|_{[i]}))\,,
\end{align*}
and if we choose $n_0$ big enough, we get $|\mu_{t}(tg^N)| < \frac{\epsilon}{4}$, for $N \geq N_0$ and all $m$ as desired.

Finally, to conclude the proof, given $\epsilon > 0$, fix $N \geq N_0$ as above and we have that
\begin{align*}
|\mu_{tf_{k_m}}(tf) - \mu_{t}(tf) | & \le |\mu_{tf_{k_m}}(t f^N) - \mu_{t}(t f^N) | + |\mu_{tf_{k_m}}(t g^N) - \mu_{t}(t g^N) | \\
&\le |\mu_{tf_{k_m}}(t f^N) - \mu_{t}(t f^N) | + |\mu_{tf_{k_m}}(t g^N)| + |\mu_{t}(t g^N) | \\
&\le |\mu_{tf_{k_m}}(t f^N) - \mu_{t}(t f^N) | + \frac{\epsilon}{2}\,,
\end{align*}
and recall that since $f^N$ is continuous and bounded and $\mu_t = \lim_{m \to \infty} \mu_{tf_{k_m}}$, the result follows.
\end{proof} 

Now we turn our attention to the estimates on $h(\mu_t)$. Let $\alpha = \{[a] : a \in \mathcal{A}\}$ be the natural partition of $\Sigma$, then for any $\mu \in \mathcal{M}_{\sigma}(\Sigma)$ we have to its Kolmogorov-Sinai entropy is defined by

\begin{equation}
h(\mu) = \inf_n \frac{1}{n} H(\mu \mid \alpha^n) \,.
\label{eq2ent}
\end{equation}
Since for any $m \in \mathbb{N}$ we have
\[
h(\mu_{tf_{k_m}}) = P_{k_m}(t) + \mu_{tf_{k_m}}(-tf) \,,
\]
the sequence $(\mu_{tf_{k_m}}(-tf))_{m \in \mathbb{N}}$ is bounded above and  $P_{k_m}(t) \leq P(t)$, then, there is a constant $B > 0$ such that 
\begin{equation}
h(\mu_{tf_{k_m}}) \leq P(t) + \mu_{tf_{k_m}}(-tf) \leq B \,.
\label{eq1ent}
\end{equation}
In this way, we have the following. 

 \begin{prop} For each $N \in \mathbb{N}$ there is $n_0 \geq N$ such that the sequence $(H(\mu_{tf_{k_m}} \mid \alpha^{n_0}))_{m \in \mathbb{N}}$ is bounded above.
\end{prop}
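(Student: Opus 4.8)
The plan is to reduce the statement to a bound on the first-level entropy $H(\mu_{tf_{k_m}}\mid\alpha)$ that is uniform in $m$, and then to transfer it to every $\alpha^{n_0}$ by subadditivity, so that in fact one may take $n_0 = N$. First I would recall that for any $\sigma$-invariant $\mu$ the sequence $n \mapsto H(\mu\mid\alpha^n)$ is subadditive: since $\alpha^{n+\ell} = \alpha^n \vee \sigma^{-n}\alpha^\ell$ and $\mu$ is $\sigma$-invariant, $H(\mu\mid\alpha^{n+\ell}) \le H(\mu\mid\alpha^n) + H(\mu\mid\sigma^{-n}\alpha^\ell) = H(\mu\mid\alpha^n) + H(\mu\mid\alpha^\ell)$. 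Hence $H(\mu_{tf_{k_m}}\mid\alpha^{n_0}) \le n_0\,H(\mu_{tf_{k_m}}\mid\alpha)$ for every $n_0$ and every $m$, and it suffices to bound $H(\mu_{tf_{k_m}}\mid\alpha)$ uniformly in $m$.

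For this, write $\varphi(x) = -x\log x$, so that $H(\mu_{tf_{k_m}}\mid\alpha) = \sum_{i}\varphi(\mu_{tf_{k_m}}[i])$, and recall that $\varphi$ is nondecreasing on $(0,1/e]$ and satisfies $\varphi \le 1/e$ on $[0,1]$. Starting from the Gibbs estimate on $1$-cylinders \eqref{gibbs_in_the_1_cylinders}, the bound $\exp(4V(tf_{k_m})) \le \exp(4tV(f))$, and a lower bound $P_{k_m}(t) \ge \underline{P}_t > -\infty$ uniform in $m$ (available because the pressures $P_k(t)$ increase with $k$ and the first one is finite), one obtains for every $i \in \mathbb{N}$ and every $m$
\[
\mu_{tf_{k_m}}[i] \le c_t\,\exp\bigl(\sup(tf|_{[i]})\bigr),\qquad c_t := \exp\bigl(4tV(f) - \underline{P}_t\bigr),
\]
with $c_t$ independent of $m$ (the left-hand side is $0$ when $i \notin \mathcal{A}_{k_m}$). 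Since $f$ is coercive, $\sup(tf|_{[i]}) \to -\infty$, so there is $i_0$ with $c_t\exp(\sup(tf|_{[i]})) \le 1/e$ for all $i > i_0$; for such $i$, monotonicity of $\varphi$ gives
\[
\varphi\bigl(\mu_{tf_{k_m}}[i]\bigr) \le \varphi\bigl(c_t e^{\sup(tf|_{[i]})}\bigr) \le c_t\bigl(|\log c_t| + \sup(-tf|_{[i]})\bigr)\exp\bigl(\sup(tf|_{[i]})\bigr),
\]
while for $i \le i_0$ one uses only $\varphi(\mu_{tf_{k_m}}[i]) \le 1/e$. Summing over $i$,
\[
H(\mu_{tf_{k_m}}\mid\alpha) \le \frac{i_0}{e} + c_t|\log c_t|\sum_{i}\exp\bigl(\sup(tf|_{[i]})\bigr) + c_t\sum_{i}\sup(-tf|_{[i]})\exp\bigl(\sup(tf|_{[i]})\bigr),
\]
and the two series are finite, the first by \eqref{eqsum} (note $tf \le f$ as $f \le 0$ and $t \ge 1$) and the second by its consequence \eqref{eqsumt}. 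Every term on the right is independent of $m$, so $\sup_m H(\mu_{tf_{k_m}}\mid\alpha) < \infty$, which together with the subadditivity above proves the proposition.

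The step I expect to be the only genuine obstacle is the logarithm inside the entropy: substituting the $1$-cylinder Gibbs bound into $\varphi$ produces the extra weight $\sup(-tf|_{[i]})$, and controlling its total contribution is precisely what forces the use of the stronger summability consequence \eqref{eqsumt} rather than merely \eqref{eqsum}. The remaining ingredients — the monotonicity and boundedness of $\varphi$, the uniform lower bound for the pressures $P_{k_m}(t)$, and the subadditivity of $n \mapsto H(\mu\mid\alpha^n)$ — are routine, and, notably, no property of the weak$^*$ limit $\mu_t$ is used at this stage.
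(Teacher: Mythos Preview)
Your argument is correct and in fact proves more than the paper asks: you show that \emph{every} $n_0\ge 1$ works, not merely some $n_0\ge N$. Your route is genuinely different from the paper's. The paper argues by contradiction from the uniform bound $h(\mu_{tf_{k_m}})\le B$ established in \eqref{eq1ent} (itself a consequence of $P_{k_m}(t)\le P(t)$ and the estimate on $\mu_{tf_{k_m}}(-tf)$), together with $h(\mu)=\inf_n \tfrac1n H(\mu\mid\alpha^n)$, to rule out unboundedness of $H(\cdot\mid\alpha^{n_0})$ along the sequence. You instead bypass the entropy bound entirely: you feed the one-cylinder Gibbs inequality \eqref{gibbs_in_the_1_cylinders} directly into $H(\mu_{tf_{k_m}}\mid\alpha)=\sum_i\varphi(\mu_{tf_{k_m}}[i])$, control the logarithmic weight via \eqref{eqsumt}, and then propagate to $\alpha^{n_0}$ by subadditivity. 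What your approach buys is a constructive, explicit uniform bound and the stronger conclusion that $n_0=N$ always suffices; what the paper's approach buys is that it reuses the already-obtained inequality \eqref{eq1ent} rather than introducing a separate estimate, so it is shorter once \eqref{eq1ent} is in hand. Your identification of \eqref{eqsumt} as the crucial input for the tail of $\sum_i\varphi(\mu[i])$ is exactly right, and the uniform lower bound $\underline{P}_t$ for $P_{k_m}(t)$ is legitimate since the $P_k(t)$ are nondecreasing in $k$ and finite on each compact $\Sigma_k$.
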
 
\begin{proof} Suppose that the proposition is not true, then there exists $N_0 \in \mathbb{N}$ such that for any $n \geq N_0$ the sequence $(H(\mu_{tf_{k_m}} \mid \alpha^n))_{m \in \mathbb{N}}$ is not bounded. Let $n' \geq N_0$ and $B' > 0$, then there exists a subsequence $(k_l)_{l \in \mathbb{N}}$ of the sequence $(k_m)_{m \in \mathbb{N}}$ such that $H(\mu_{tf_{k_l}} \mid \alpha^{n'}) > B'$ for any $l \in \mathbb{N}$, particularly we have to $\frac{1}{n'}H(\mu_{tf_{k_l}} \mid \alpha^{n'}) > \frac{B'}{n'}$. Since $B'$ is choosing arbitrarily, then we can choose the sequence $(k_l)_{l \in \mathbb{N}}$ such that the last inequality is true for $B' = 2Bn'$ i.e. $\frac{1}{n'}H(\mu_{tf_{k_l}} \mid \alpha^{n'}) > 2B$ for any $l \in \mathbb{N}$. On other hand the sequence $(h(\mu_{tf_{k_m}}))_{m \in \mathbb{N}}$ is bounded above with upper bound $B$, then by \eqref{eq2ent} we obtain a contradiction. Therefore the proposition is true. 
\end{proof}

\begin{lema}\label{lement} Let $\mu_t = \lim_{m \to \infty} \mu_{tf_{k_m}}$, then for each $N \in \mathbb{N}$ there are $n_0 \geq N$ and a subsequence $(k_l)_{l \in \mathbb{N}}$ of the sequence $(k_m)_{m \in \mathbb{N}}$ such that 
\[
H(\mu_t \mid \alpha^{n_0}) = \lim_{l \to \infty} H(\mu_{tf_{k_l}} \mid \alpha^{n_0}) \,.
\]
\end{lema}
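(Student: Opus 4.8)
The plan is to reduce the statement to the identity $H(\mu\mid\alpha^{n_0})=\sum_{\omega}\varphi(\mu[\omega])$, where $\varphi(s)=-s\log s$ (with $\varphi(0)=0$), the sum running over all admissible words $\omega$ of length $n_0$, and then to show that this countable series converges \emph{uniformly in $m$} along the sequence $(\mu_{tf_{k_m}})_m$ by exhibiting a single summable majorant $(D_\omega)_\omega$. Recall that $\varphi$ is continuous on $[0,1]$, bounded by $e^{-1}$, and strictly increasing on $[0,e^{-1}]$. First I would fix $n_0\ge N$ (for definiteness the one supplied by the previous Proposition, although any value of $n_0$ works) and observe that each cylinder $[\omega]$ is clopen, so $\partial[\omega]=\emptyset$ and $[\omega]$ is a continuity set for every weak$^\ast$ limit; hence $\mu_{tf_{k_m}}[\omega]\to\mu_t[\omega]$ and, by continuity of $\varphi$, $\varphi(\mu_{tf_{k_m}}[\omega])\to\varphi(\mu_t[\omega])$ for each fixed $\omega$. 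Once the majorant is available, the dominated convergence theorem for series gives $H(\mu_{tf_{k_m}}\mid\alpha^{n_0})\to H(\mu_t\mid\alpha^{n_0})$ along the whole sequence $(k_m)$, which in particular produces the subsequence asserted, and as a bonus $H(\mu_t\mid\alpha^{n_0})<\infty$.

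Next I would build the majorant from the Gibbs property on the compact pieces. For each $m$, $\mu_{tf_{k_m}}$ is supported on $\Sigma_{k_m}$ and is the Gibbs state of $tf_{k_m}$ there, so \eqref{gibbs}, \eqref{C2} together with $V(tf_{k_m})\le tV(f)$ give, for every admissible word $\omega$ of length $n_0$ (the inequality being trivial when $[\omega]\cap\Sigma_{k_m}=\emptyset$),
\[
\mu_{tf_{k_m}}[\omega]\ \le\ \exp\!\big(4tV(f)\big)\exp\!\big(\sup S_{n_0}(tf)|_{[\omega]}-n_0P_{k_m}(t)\big)\ \le\ K\,e^{s_\omega}\ =:\ a_\omega,
\]
where $s_\omega:=\sup S_{n_0}(tf)|_{[\omega]}$, $K:=\exp\!\big(4tV(f)-n_0P_\ast\big)$ and $P_\ast:=\inf_m P_{k_m}(t)>-\infty$ (finite, since $P_{k_m}(t)\uparrow P(t)$). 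Writing $r_i:=\sup(tf|_{[i]})\le0$ and using $f\le0$ one has $s_\omega\le\sum_{j=0}^{n_0-1}r_{\omega_j}$; using moreover that $V_1(tf)\le tV(f)<\infty$ bounds the oscillation of $tf$ on any cylinder, one gets $s_\omega\ge\sum_{j=0}^{n_0-1}r_{\omega_j}-c\,n_0$ for a constant $c=c(t,V(f))$, whence $0\le-s_\omega\le\sum_{j}|r_{\omega_j}|+c\,n_0$ and $e^{s_\omega}\le\prod_j e^{r_{\omega_j}}$.

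With these two elementary bounds the relevant sums are finite: $\sum_\omega a_\omega\le K\prod_{j=0}^{n_0-1}\big(\sum_i e^{r_i}\big)=KZ^{n_0}$ and $\sum_\omega(-s_\omega)e^{s_\omega}\le\sum_\omega\big(\sum_j|r_{\omega_j}|+c\,n_0\big)\prod_j e^{r_{\omega_j}}\le c\,n_0Z^{n_0}+n_0Z^{\,n_0-1}Z'$, where $Z:=\sum_i e^{r_i}<\infty$ (this follows from \eqref{eqsum}, since $t\ge1$ and $r_i\le\sup(f|_{[i]})$) and $Z':=\sum_i|r_i|e^{r_i}<\infty$ is precisely \eqref{eqsumt}. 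Consequently only finitely many $\omega$ satisfy $a_\omega>e^{-1}$; calling that finite set $\mathcal{F}$, for $\omega\notin\mathcal{F}$ we have $\varphi(a_\omega)=(-\log K)a_\omega+K(-s_\omega)e^{s_\omega}$, so $\sum_\omega\varphi(a_\omega)<\infty$. Setting $D_\omega:=\varphi(a_\omega)$ for $\omega\notin\mathcal{F}$ and $D_\omega:=e^{-1}$ for $\omega\in\mathcal{F}$, monotonicity of $\varphi$ on $[0,e^{-1}]$ yields $\varphi(\mu_{tf_{k_m}}[\omega])\le D_\omega$ for all $m$ and all $\omega$, and $\sum_\omega D_\omega<\infty$; this is the majorant required above.

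The main obstacle is exactly this uniform control of the tail of the entropy series: since $\alpha^{n_0}$ is a \emph{countable} partition, pointwise convergence of the individual masses $\mu_{tf_{k_m}}[\omega]$ is not enough, and one must play the exponential decay of $\mu_{tf_{k_m}}[\omega]$ (from the Gibbs bound on the $\Sigma_{k_m}$, uniform in $m$) against the logarithmic weight in $\varphi$ (from the reinforced summability \eqref{eqsumt}); the hypothesis $V(f)<\infty$ intervenes only to keep $s_\omega$ within a bounded distance of $\sum_j r_{\omega_j}$, i.e.\ to control the oscillation of Birkhoff sums on cylinders. The remaining ingredients — clopenness of cylinders, continuity and monotonicity of $\varphi$, and the two-sided estimate on $s_\omega$ — are routine, and the boundedness of $(H(\mu_{tf_{k_m}}\mid\alpha^{n_0}))_m$ from the Proposition drops out as a byproduct rather than being used.
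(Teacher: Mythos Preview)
Your argument is correct and in fact yields more than the lemma states: you get convergence of $H(\cdot\mid\alpha^{n_0})$ along the \emph{entire} sequence $(k_m)_m$ and for \emph{every} $n_0\ge N$, not just for one special $n_0$ and a further subsequence. The route is genuinely different from the paper's. There the authors first invoke the preceding Proposition to select an $n_0$ at which $(H(\mu_{tf_{k_m}}\mid\alpha^{n_0}))_m$ is bounded, pass to a convergent subsequence, and then establish the two inequalities $H(\mu_t\mid\alpha^{n_0})\le\liminf$ and $H(\mu_t\mid\alpha^{n_0})\ge\limsup$ by Fatou-type manipulations (interchanging $\sum$ with $\inf$/$\sup$ and invoking monotone convergence), the second being declared ``a similar proceeding''. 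You bypass both the Proposition and the subsequence extraction by exhibiting, directly from the uniform Gibbs bound on each $\Sigma_{k_m}$ combined with \eqref{eqsum} and \eqref{eqsumt}, an explicit $m$-independent summable majorant $(D_\omega)_\omega$ for $\varphi(\mu_{tf_{k_m}}[\omega])$, after which dominated convergence for series finishes the job. What this buys you is a sharper conclusion and the observation that the preceding Proposition becomes redundant (it falls out of your majorant). Conversely, the paper's approach is lighter at this step but leans on that Proposition and on a reverse-Fatou direction whose rigorous justification would itself require precisely the kind of uniform domination you construct.
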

\begin{proof} Let $N \in \mathbb{N}$, by the above proposition there exists $n_0 \geq N$ such that $(H(\mu_{tf_{k_m}} \mid \alpha^{n_0}))_{m \in \mathbb{N}}$ is bounded above, this implies that there is a subsequence $(k_l)_{l \in \mathbb{N}}$ of the sequence $(k_m)_{m \in \mathbb{N}}$ such that $\lim_{l \to \infty} H(\mu_{tf_{k_l}} \mid \alpha^{n_0}) < \infty$. 

Moreover for $l_0 \in \mathbb{N}$ and any $j \geq l_0$ we have to
\[
\inf_{l \geq l_0}\{-\mu_{tf_{k_l}}[\omega]\log(\mu_{tf_{k_l}}[\omega])\} \leq -\mu_{tf_{k_j}}[\omega]\log(\mu_{tf_{k_j}}[\omega]),
\]
therefore summing up all the $[\omega] \in \alpha^{n_0}$ we obtain that
\[
\sum_{[\omega] \in \alpha^{n_0}} \inf_{l \geq l_0}\{-\mu_{tf_{k_l}}[\omega]\log(\mu_{tf_{k_l}}[\omega])\} \leq \inf_{j \geq l_0} \left\{\sum_{[\omega] \in \alpha^{n_0}} -\mu_{tf_{k_j}}[\omega]\log(\mu_{tf_{k_j}}[\omega])\right\} \,.
\]
Moreover, taking the limit as $l_0 \to \infty$ and using the monotone convergence theorem for the integrable function $\phi_m(x) = \inf_{l \geq m}\{-\mu_{tf_{k_l}}(x)\log(\mu_{tf_{k_l}}(x))\}$ with the counting measure on the set $\{[\omega] : [\omega] \in \alpha^{n_0}\}$, we get
\[
\sum_{[\omega] \in \alpha^{n_0}} \liminf_{l \to \infty}(-\mu_{tf_{k_l}}[\omega]\log(\mu_{tf_{k_l}}[\omega])) \leq \liminf_{l \to \infty} \sum_{[\omega] \in \alpha^{n_0}} -\mu_{tf_{k_l}}[\omega]\log(\mu_{tf_{k_l}}[\omega]) \,.
\]
Since $\mu_t = \lim_{l \to \infty} \mu_{tf_{k_l}}$ and $\mu_t(\partial[\omega]) = 0$ for any cylinder $[\omega] \in \alpha^{n_0}$, then we have to $\mu_t[\omega]\log(\mu_t[\omega]) = \lim_{l \to \infty} \mu_{tf_{k_l}}[\omega]\log(\mu_{tf_{k_l}}[\omega])$. 

Therefore follows that
\[
\sum_{[\omega] \in \alpha^{n_0}} -\mu_t[\omega]\log(\mu_t[\omega]) \leq \liminf_{l \to \infty} \sum_{[\omega] \in \alpha^{n_0}} -\mu_{tf_{k_l}}[\omega]\log(\mu_{tf_{k_l}}[\omega]) \,.
\]
By a similar proceeding we can prove that 
\[
\sum_{[\omega] \in \alpha^{n_0}} -\mu_t[\omega]\log(\mu_t[\omega]) \geq \limsup_{l \to \infty} \sum_{[\omega] \in \alpha^{n_0}} -\mu_{tf_{k_l}}[\omega]\log(\mu_{tf_{k_l}}[\omega]) \,,
\]
therefore we conclude that 
\[
H(\mu_t \mid \alpha^{n_0}) = \lim_{l \to \infty} H(\mu_{tf_{k_l}} \mid \alpha^{n_0}) \,.
\]
\end{proof}

\begin{prop} \label{proplimsup}Let $(\mu_{tf_{k_l}})_{l \in \mathbb{N}}$ be a sequence given by the previous lemma. Then $\limsup_{l \to \infty} h(\mu_{tf_{k_l}}) \leq h(\mu_t)$.
\end{prop}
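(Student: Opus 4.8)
The plan is to combine the variational formula $h(\mu)=\inf_n\frac1n H(\mu\mid\alpha^n)$ from \eqref{eq2ent} with the cylinder-level convergence already secured in Lemma \ref{lement}. For any fixed $n_0$ and every $l$ one has the trivial bound $h(\mu_{tf_{k_l}})\le \frac1{n_0}H(\mu_{tf_{k_l}}\mid\alpha^{n_0})$, and the right-hand side is a \emph{finite} sum of the continuous-set quantities $-\mu_{tf_{k_l}}[\omega]\log\mu_{tf_{k_l}}[\omega]$, so passing to the $\limsup$ in $l$ and invoking Lemma \ref{lement} gives
\[
\limsup_{l\to\infty}h(\mu_{tf_{k_l}})\;\le\;\frac1{n_0}\lim_{l\to\infty}H(\mu_{tf_{k_l}}\mid\alpha^{n_0})\;=\;\frac1{n_0}H(\mu_t\mid\alpha^{n_0}).
\]
The remaining task is to show that $\frac1{n_0}H(\mu_t\mid\alpha^{n_0})$ can be made arbitrarily close to $h(\mu_t)$, which is exactly where the freedom in choosing $N$ large in Lemma \ref{lement} is exploited.

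To carry this out I would first note that $n\mapsto H(\mu_t\mid\alpha^n)$ is subadditive, since $\alpha^{n+m}=\alpha^n\vee\sigma^{-n}\alpha^m$ and $\mu_t$ is $\sigma$-invariant; by Fekete's lemma $\frac1n H(\mu_t\mid\alpha^n)$ then converges to $\inf_n\frac1n H(\mu_t\mid\alpha^n)=h(\mu_t)$. (Finiteness is not an issue: Lemma \ref{lement} already yields some $n_0$ with $H(\mu_t\mid\alpha^{n_0})<\infty$, and refinement plus subadditivity propagate this to every $n$, so $h(\mu_t)<\infty$.) Now fix $\epsilon>0$ and pick $N$ so large that $\frac1n H(\mu_t\mid\alpha^n)<h(\mu_t)+\epsilon$ for all $n\ge N$; applying Lemma \ref{lement} with this $N$ provides $n_0\ge N$ and a subsequence $(k_l)\subset(k_m)$ with $H(\mu_t\mid\alpha^{n_0})=\lim_l H(\mu_{tf_{k_l}}\mid\alpha^{n_0})$. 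Substituting into the display yields $\limsup_l h(\mu_{tf_{k_l}})\le\frac1{n_0}H(\mu_t\mid\alpha^{n_0})<h(\mu_t)+\epsilon$.

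Finally, to upgrade this to the clean statement along a single subsequence, I would use the previous Proposition, which gives boundedness of $(H(\mu_{tf_{k_m}}\mid\alpha^{n})_{m}$ along the \emph{whole} sequence $(k_m)$ for suitable arbitrarily large $n$; hence a single diagonal extraction over $j\in\mathbb N$ produces one subsequence $(k_l)$ along which $H(\mu_{tf_{k_l}}\mid\alpha^{n_{0,j}})\to H(\mu_t\mid\alpha^{n_{0,j}})$ simultaneously for every $j$, where $n_{0,j}\ge N_j$ is chosen as above with $\epsilon=1/j$. Along this subsequence $\limsup_l h(\mu_{tf_{k_l}})\le h(\mu_t)+1/j$ for all $j$, and letting $j\to\infty$ finishes the proof. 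I expect the one genuinely delicate point to be precisely this bookkeeping — the subsequence handed back by Lemma \ref{lement} depends on the target accuracy through $N$, so one must either diagonalize as above or keep $\epsilon$ as a parameter if only the approximate inequality is needed later; the analytic ingredients (subadditivity, the bound $h\le\frac1{n_0}H(\cdot\mid\alpha^{n_0})$, and interchanging $\limsup_l$ with a finite sum) are all routine once Lemma \ref{lement} is available.
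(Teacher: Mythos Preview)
Your approach is essentially the paper's: both use the bound $h(\mu)\le\frac1{n_0}H(\mu\mid\alpha^{n_0})$ from \eqref{eq2ent}, pass to the limit in $H(\cdot\mid\alpha^{n_0})$ via Lemma~\ref{lement}, and then push $n_0$ large; the paper packages this as a contradiction while you argue directly, and you are in fact more careful than the paper about the dependence of the subsequence on $N$ (the paper glosses over exactly the bookkeeping you diagonalize through).

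One correction that does not break the argument but should be fixed: $\alpha^{n_0}$ is a \emph{countable} partition here, since the alphabet is infinite, so $H(\cdot\mid\alpha^{n_0})$ is not a finite sum and the interchange of $\limsup_l$ with the sum is not routine. This is precisely the content of Lemma~\ref{lement}, which you do invoke, so the displayed inequality is justified --- just drop the ``finite sum'' parenthetical, as that justification is false and the lemma is doing the real work.
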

\begin{proof}
 Suppose that $\limsup_{l \to \infty} h(\mu_{tf_{k_l}}) > h(\mu_t)$, then we can choose $\epsilon > 0$ such that $h(\mu_t) \leq \limsup_{l \to \infty} h(\mu_{tf_{k_l}}) - 3\epsilon$. From \eqref{eq2ent}, $\frac{1}{n} H(\mu_t \mid \alpha^n) \leq h(\mu_t) + \epsilon$ for $n$ big enough, and by lemma \ref{lement}, there is $n_0 \geq n$ such that $\lim_{l \to \infty} H(\mu_{tf_{k_l}} \mid \alpha^{n_0}) = H(\mu_t \mid \alpha^{n_0})$. Finally by \eqref{eq2ent}, there is $l_0 \in \mathbb{N}$ such that for $j \ge l_0$ we have $h(\mu_{tf_{k_{j}}}) \leq \frac{1}{n_0} H(\mu_{tf_{k_{j}}} \mid \alpha^{n_0})$, therefore 
\begin{align*}
h(\mu_{tf_{k_{j}}}) &\leq \frac{1}{n_0} H(\mu_t \mid \alpha^{n_0}) + \epsilon \\
								&\leq h(\mu_t) + 2\epsilon \\
								&\leq \limsup_{l \to \infty} h(\mu_{tf_{k_l}}) - \epsilon \,,
\end{align*}
and taking the $\limsup$ as $j \to \infty$ in the left side of the inequality, we obtain a contradiction. In this way, we conclude that $\limsup_{l \to \infty} h(\mu_{tf_{k_l}}) \leq h(\mu_t)$ \,.
\end{proof}

\begin{proof}{\emph{(Theorem 1)}}
Let $(k_l)_{l \in \mathbb{N}}$ be a sequence as in lemma \ref{lement}. Since $(k_l)_{l \in \mathbb{N}}$ is a subsequence of $(k_m)_{m \in \mathbb{N}}$ and the sequence $(P_{k_l}(t))_{l \in \mathbb{N}}$ is increasing, from \eqref{VPApproximation} and lemma \ref{lema_lim_ut} we have 
\[
\lim_{l \to \infty} P_{k_l}(t) = P(t)
\] 
and 
\[
\lim_{l \to \infty} \mu_{tf_{k_l}}(tf) = \mu_t(tf)\,.
\] 

Therefore 
\[
P(t) = \lim_{l \to \infty} P_{k_l}(t) \leq \limsup_{l \to \infty}h(\mu_{tf_{k_l}}) + \limsup_{l \to \infty}\mu_{tf_{k_l}}(tf_{k_l}) \leq h(\mu_t) + \mu_t(tf) \,.
\]

The above inequality shows that $\mu_t$ is an equilibrium state associated to the potential $tf$. Since $P(tf) < \infty$, $tf$ is bounded above and $V(tf) < \infty$, from \cite{BuSa03}, we have that the equilibrium state associated to the potential $tf$ is unique and, therefore, $\mu_t = \mu_{tf}$.

To prove that the family $(\mu_{tf})_{t>1}$ is tight, notice that from equation \eqref{ut_is_tight} and that $\mu_t = \mu_{tf}$, we can take the limit as $m \to \infty$ at both sides, maybe under a convergent subsequence of $\mu_{tf_k}$ and noticing that the cylinders are continuity sets, and we get
\[
\mu_{tf}([i]) \leq \exp(4V(f) + \sup f |_{[i]} - S) \,.
\]
Observe that the right side does not depend on $t$. So, following the same conclusion of lemma \ref{lematight}, we can conclude that $(\mu_{tf})_{t>1}$ is tight. Taking any subsequence $t_k \to \infty$, we find an accumulation point for $(\mu_{tf})_{t>1}$ as $t\to\infty$, and theorem 1 is proved.
\end{proof} 

A consequence of this proof is the following corollary which states that the whole sequence $(\mu_{tf_k})_{k \in \mathbb{N}}$ is in fact convergent to the equilibrium state $\mu_{tf}$. That is interesting in the sense that we can approximate the equilibrium states of the non-compact case by the ones in  the invariant compact subshifts, which is not trivial since the space is not $\sigma$-compact. 

\begin{corol} For each $t > 1$ the equilibrium states sequence $(\mu_{tf_k})_{k \in \mathbb{N}}$ converges to $\mu_{tf}$.
\end{corol}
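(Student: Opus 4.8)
The plan is to deduce the corollary from Theorem~1 together with a soft compactness argument, since all the substantive estimates have already been carried out. \textbf{Step 1.} By Lemma~\ref{lematight} the sequence $(\mu_{tf_k})_{k\in\mathbb{N}}$ is tight, hence by Prohorov's theorem it is relatively compact in the weak$^*$ topology of $\mathcal{M}_\sigma(\Sigma)$; in particular every subsequence admits a weak$^*$-convergent further subsequence.

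\textbf{Step 2.} I would show that every weak$^*$ accumulation point of $(\mu_{tf_k})_k$ equals $\mu_{tf}$. Let $\nu=\lim_{i\to\infty}\mu_{tf_{k_i}}$ along some subsequence. Since $(k_i)_i$ is again an increasing sequence of naturals tending to $\infty$ and $(\mu_{tf_{k_i}})_i$ is a convergent subsequence of $(\mu_{tf_k})_k$, the entire argument used to prove Theorem~1 applies verbatim with $(k_i)_i$ in place of $(k_m)_m$ and $\nu$ in place of $\mu_t$: Lemma~\ref{lema_lim_ut} gives $\mu_{tf_{k_i}}(tf)\to\nu(tf)$ with $\nu(tf)$ finite; Proposition~\ref{proplimsup} (after extracting the subsequence furnished by Lemma~\ref{lement}) gives $\limsup_i h(\mu_{tf_{k_i}})\le h(\nu)$; and \eqref{VPApproximation} gives $P_{k_i}(t)\to P(t)$ because $k_i\to\infty$ and $(P_k(t))_k$ is increasing with supremum $P(t)$. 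Combining these,
\[
P(t)=\lim_{i\to\infty}P_{k_i}(t)\le h(\nu)+\nu(tf)\le P(t)\,,
\]
so $\nu$ is an equilibrium state for $tf$, and by the uniqueness theorem of \cite{BuSa03}, $\nu=\mu_{tf}$.

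\textbf{Step 3.} Finally, a relatively compact sequence all of whose accumulation points coincide converges to that common value. If one prefers to avoid invoking metrizability of the weak$^*$ topology (which holds here since $\Sigma$ is separable metrizable), one argues by contradiction: were $(\mu_{tf_k})_k$ not weak$^*$-convergent to $\mu_{tf}$, there would be a weak$^*$-open neighbourhood $U\ni\mu_{tf}$ and a subsequence lying in the closed set $\mathcal{M}_\sigma(\Sigma)\setminus U$; by Step~1 this subsequence has a convergent sub-subsequence whose limit lies in $\mathcal{M}_\sigma(\Sigma)\setminus U$, hence differs from $\mu_{tf}$, contradicting Step~2. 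Therefore $\mu_{tf_k}\to\mu_{tf}$ as $k\to\infty$.

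I do not expect a genuine obstacle here: the only point requiring a little care is that the passages to sub-subsequences inside Lemma~\ref{lement} and Proposition~\ref{proplimsup} are harmless, because one only needs a single subsequence along which $\limsup_i h(\mu_{tf_{k_i}})\le h(\nu)$ in order to conclude from the variational principle that $\nu$ is an equilibrium state — which is exactly what those results deliver — after which uniqueness pins down $\nu=\mu_{tf}$ regardless of the subsequence chosen.
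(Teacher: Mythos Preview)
Your proposal is correct and follows essentially the same route as the paper's own proof: the paper also argues by contradiction, using metrizability of $\mathcal{M}_\sigma(\Sigma)$ to extract a subsequence bounded away from $\mu_{tf}$, then invoking tightness (Lemma~\ref{lematight}) to obtain a convergent sub-subsequence and applying the Theorem~1 argument together with the uniqueness result of \cite{BuSa03} to force its limit to be $\mu_{tf}$, yielding the contradiction. Your Step~3 spells out the subsequence principle slightly more carefully, but the structure and the ingredients are identical.
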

\begin{proof} Suppose that the sequence $(\mu_{tf_k})_{k \in \mathbb{N}}$ is not convergent. Recall  $\mathcal{M}_{\sigma}(\Sigma)$ is a metrizable space and let $d$ be any distance on this space. Then, there is $\epsilon_0 > 0$ and a subsequence $(\mu_{tf_{k_j}})_{j \in \mathbb{N}}$ of the sequence $(\mu_{tf_k})_{k \in \mathbb{N}}$ such that for any $j \in \mathbb{N}$ 
\[
d(\mu_{tf_{k_j}}, \mu_{tf}) \geq  \epsilon_0 \,. 
\]

In particular, notice that $(\mu_{tf_{k_j}})_{j \in \mathbb{N}}$ cannot converge to $\mu_{tf}$.

Now, by lemma \ref{lematight}, the sequence $(\mu_{tf_{k_j}})_{j \in \mathbb{N}}$ is tight, therefore the argument used to prove theorem 1 implies that there exists $(\mu_{tf_{k_i}})_{i \in \mathbb{N}}$ be a subsequence of $(\mu_{tf_{k_j}})_{j \in \mathbb{N}}$ such that $$\lim_{i \to \infty} \mu_{tf_{k_i}} = \mu_{tf}\,.$$ Since the equilibrium state is unique, this is a contradiction, which proves the result.
\end{proof}

Observe that, by remark \ref{rem1}, the assumption that $f$ is summable is essential for the proof of the theorem \ref{teor1}, and therefore, also for theorem \ref{teor3}. Otherwise, there are counterexamples, like the one in \cite{Sar00}, where it can be checked that the potential is not summable, although it is Markov.

\section{Proof of the theorem 2}

In this section we prove the other theorem of this work. This proof is a direct consequence of the theorem \ref{teor1} and the following result. 

The following proposition shows that there exist $k_0$ large enough such that the set of the $f_k$-maximizing measures is the same for each $k \geq k_0$. This is the key point where we use the results in \cite{BiFr14} to locate the ground states.

\begin{prop}\label{propmax} There is $k_0$ such that, for each $k \geq k_0$, we have that $\beta_k = \beta$ and $\mathcal{M}_{max}(f_k) = \mathcal{M}_{max}(f)$. 
\end{prop}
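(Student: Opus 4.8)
The plan is to use the main theorem of \cite{BiFr14} as quoted in section 3: since $f$ is summable, hence coercive, there is a finite set $F \subset \mathcal{A}$ such that every $f$-maximizing measure is supported on $\Sigma_F$, and $\beta = \sup\{\mu(f) : \mu \in \mathcal{M}_\sigma(\Sigma_F)\}$ with $\mathcal{M}_{\max}(f) \neq \emptyset$. Because the alphabets $\mathcal{A}_k$ increase to $\mathcal{A}$, I would first fix $k_0$ large enough that $F \subset \mathcal{A}_{k_0}$; then $\Sigma_F \subset \Sigma_k$ for every $k \geq k_0$ (here I want to make sure the construction of $\Sigma_k$ in section 3 genuinely contains the full subshift on $\mathcal{A}_k$, or at least contains $\Sigma_F$ — if $\Sigma_k$ only contains \emph{some} admissible words on $\mathcal{A}_k$ I may need to enlarge $k_0$ so that all the finitely many words needed to realize the maximizing measures on $\Sigma_F$ already appear, which is possible by topological transitivity of $\Sigma$).

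Granting $\Sigma_F \subset \Sigma_k \subset \Sigma$ for $k \geq k_0$, the inequalities $\beta_k \leq \beta$ are immediate since $\mathcal{M}_\sigma(\Sigma_k) \subset \mathcal{M}_\sigma(\Sigma)$ and $f_k = f|_{\Sigma_k}$. For the reverse inequality, take any $\mu \in \mathcal{M}_{\max}(f)$; by \cite{BiFr14}, $\mathrm{supp}(\mu) \subset \Sigma_F \subset \Sigma_k$, so $\mu \in \mathcal{M}_\sigma(\Sigma_k)$ and $\mu(f_k) = \mu(f) = \beta$, whence $\beta_k \geq \beta$. Thus $\beta_k = \beta$ for all $k \geq k_0$, and the same computation shows $\mathcal{M}_{\max}(f) \subset \mathcal{M}_{\max}(f_k)$.

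For the inclusion $\mathcal{M}_{\max}(f_k) \subset \mathcal{M}_{\max}(f)$: if $\nu \in \mathcal{M}_{\max}(f_k)$, view $\nu$ as an element of $\mathcal{M}_\sigma(\Sigma)$ (extending by zero outside $\Sigma_k$, which is legitimate since $\Sigma_k$ is a closed $\sigma$-invariant subset). Then $\nu(f) = \nu(f_k) = \beta_k = \beta$, so $\nu \in \mathcal{M}_{\max}(f)$. Combining the two inclusions gives $\mathcal{M}_{\max}(f_k) = \mathcal{M}_{\max}(f)$ for every $k \geq k_0$.

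The only real subtlety — and the step I would treat most carefully — is the compatibility of the $\Sigma_k$'s with the set $\Sigma_F$ coming from \cite{BiFr14}: one must know that for large $k$ the compact subshift $\Sigma_k$ actually contains $\Sigma_F$ (and not merely the alphabet $F$). This follows because $\Sigma_F$ is itself a compact subshift of $\Sigma$, and the construction in section 3 was arranged precisely so that \emph{any} compact subshift of $\Sigma$ is contained in $\Sigma_k$ for $k$ large (the same fact that gives \eqref{VPSigmas}); so it suffices to choose $k_0$ with $\Sigma_F \subset \Sigma_{k_0}$. Everything else is the routine monotonicity of $\beta_k$ in $k$ together with the support localization of maximizing measures. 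Once $k_0$ is fixed this way, monotonicity $\beta_{k} \le \beta_{k+1} \le \beta$ (from $\Sigma_k \subset \Sigma_{k+1} \subset \Sigma$) plus $\beta_{k_0} = \beta$ even forces $\beta_k = \beta$ directly without re-running the argument for each $k$.
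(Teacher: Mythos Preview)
Your proof is correct and takes essentially the same approach as the paper: invoke \cite{BiFr14} to obtain the finite set $F$ with $\mathcal{M}_{\max}(f)$ supported in $\Sigma_F$, choose $k_0$ with $\Sigma_F \subset \Sigma_{k_0}$, and then use monotonicity of $\beta_k$ together with the identification $\mu(f_k)=\mu(f)$ for measures supported in $\Sigma_k$ to get $\beta_k=\beta$ and the two inclusions between the maximizing sets. Your extra care in justifying $\Sigma_F \subset \Sigma_{k_0}$ via the property of the exhaustion in section~3 is actually more explicit than the paper, which simply asserts this and then writes ``w.l.o.g.\ $\Sigma_F = \Sigma_{k_0}$''.
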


\begin{proof} It is proved in \cite{BiFr14} that there exits a finite set $F \subset \mathbb{N}$ such that
\[
\beta = \sup \{ \mu(f) : \mu \in \mathcal{M}_{\sigma}(\Sigma_F)\},
\]
and every measure $\mu \in \mathcal{M}_{max}(f)$ satisfies $supp(\mu) \subset \Sigma_F$. Moreover, there exists $k_0 \geq 0$ such that $\Sigma_F \subset \Sigma_{k_0}$. Therefore, we can suppose w.l.o.g. that $\Sigma_F = \Sigma_{k_0}$. Then
\begin{align*}
\beta &= \sup \{ \mu(f) : \mu \in \mathcal{M}_{\sigma}(\Sigma_{k_0})\} \\
       &= \sup \{ \mu(f_{k_0}) : \mu \in \mathcal{M}_{\sigma}(\Sigma_{k_0})\} \\
			 &= \beta_{k_0}.
\end{align*}
Observe that for each $k$ we have $\beta_k \leq \beta_{k+1} \leq \beta$, this is because $\mathcal{M}_{\sigma}(\Sigma_k) \subset \mathcal{M}_{\sigma}(\Sigma_{k+1})$. Besides that, $\mu(f_k) = \mu(f_{k+1})$ for any $\mu \in \mathcal{M}_{\sigma}(\Sigma_k)$, then $\beta_k = \beta$ for each $k \geq k_0$ and every measure $\mu \in \mathcal{M}_{max}(f)$ satisfies $supp(\mu) \subset \Sigma_{k_0}$, moreover we have that
\[
\beta = \mu(f) = \mu(f_k)\,,
\]
that is $\mu \in \mathcal{M}_{max}(f_k)$. Let $k \geq k_0$, if the probability measure $\mu_k \in \mathcal{M}_{max}(f_k)$, using that $supp(\mu_k) \subset \Sigma_k$, we have
\[
\beta_k = \mu_k(f_k) = \mu_k(f),
\]
since $\beta_k = \beta$, then we conclude that $\mu_k \in \mathcal{M}_{max}(f)$.
\end{proof}

The following proof is basically a consequence of the theorem \ref{teor1} and the previous proposition, in addition to this we have to this result is a complete generalization of the theorem of Morris in \cite{Mor07} beyond the finitely primitive case.

\begin{proof}{\emph{(Theorem 2)}}

Since the $\Sigma_k$'s are compact, then the functions $t \mapsto h(\mu_{tf_k})$ are decreasing for each $k \in \mathbb{N}$, besides that using (9) and the Lemma 2 join to the corollary 1, follows immediately of the variational principle that $\lim_{k \to \infty}h(\mu_{tf_k}) = h(\mu_{tf})$. Therefore, for each $t_1 > t_0 > 1$ and any $k \in \mathbb{N}$ we have to $h(\mu_{t_0f_k}) > h(\mu_{t_1f_k})$, then taking the limit as $k \to \infty$ we obtain that  
\[
h(\mu_{t_0f}) = \lim_{k \to \infty}h(\mu_{t_0f_k}) \geq \lim_{k \to \infty}h(\mu_{t_1f_k}) = h(\mu_{tf})\,,
\]
i.e. the family $(h(\mu_{tf}))_{t>1}$ is non-increasing and particularly is bounded above for $t$ large enough. Using again the variational principle we obtain that
\[
\frac{h(\mu_{tf})}{t} + \mu_{tf}(f) = \sup\left\{\frac{h(\mu)}{t} + \mu(f) : \mu \in \mathcal{M}_{\sigma}(\Sigma) \text{ and } \mu(f) > -\infty \right\}\,,
\]
then taking an increasing sequence $(t_i)_{i \in \mathbb{N}}$ in $(1, \infty)$ such that $\lim_{j \to \infty} \mu_{t_jf} = \mu_{\infty}$ and using the convexity of the function $P(t)$, follows immediately that this function admits an asymptote as $t \to \infty$ with slope $\beta$, i.e. $P(t) = h + t\beta + \rho(t)$ with $h \in \mathbb{R}$ and $\lim_{t \to \infty} \rho(t) = 0$.

On other hand, for each $k \geq k_0$ we have to $\mathcal{M}_{\max}(f) = \mathcal{M}_{\max}(f_k)$, therefore $\mathcal{M}_{\max}(f)$ is a compact set of $\mathcal{M}_{\sigma}(\Sigma_{k_0})$ and so is compact in $\mathcal{M}_{\sigma}(\Sigma)$. By the upper semi-continuity of the function $\mu \mapsto h(\mu)$ restricted to the compact set $\mathcal{M}_{\max}(f)$, there exists a maximal element $\hat{\mu} \in \mathcal{M}_{\max}(f)$ i.e. a probability measure such that $h(\mu) \leq h(\hat{\mu})$ for any $\mu \in \mathcal{M}_{\max}(f)$, particularly $h(\mu_{\infty}) \leq h(\hat{\mu})$. 

Since $\hat{\mu}(f) = \beta$, by the variational principle follows that 
\begin{equation}\label{inequality 1}
h(\hat{\mu}) + t\beta(f) \leq P(t) = h + t\beta(f) + \rho(t),
\end{equation}
this inequality is valid for each $t > 1$, therefore $h(\hat{\mu}) \leq h + \rho(t)$, taking the limit as $t \to \infty$ follows that $h(\hat{\mu}) \leq h$. Besides that $\mu_{tf}$ is an equilibrium state associated to the potential $tf$, then 
\begin{equation}\label{inequality 2}
h + t\beta(f) + \rho(t) = h(\mu_{tf}) + t\mu_{tf}(f) \leq h(\mu_{tf}) + t\beta(f),
\end{equation}
i.e. $h(\mu_{\infty}) \leq \limsup_{t \to \infty} h(\mu_{tf})$.

Now we just need to prove that $\limsup_{t \to \infty}h(\mu_{tf}) \leq h(\mu_\infty)$, in fact observe that for any increasing sequence $(t_j)_{j \in \mathbb{N}}$ in $(1, \infty)$ and each $t' \geq t_j$ we have
\[
\inf_{t \geq t_j}\{-\mu_{tf}[\omega]\log(\mu_{tf}[\omega])\} \leq -\mu_{t'f}[\omega]\log(\mu_{t'f}[\omega]).
\]
Summing up all the $[\omega] \in \alpha^{n_0}$ we obtain that for each $t' \geq t_j$
\[
\sum_{[\omega] \in \alpha^{n_0}}\inf_{t \geq t_j}\{-\mu_{tf}[\omega]\log(\mu_{tf}[\omega])\} \leq \sum_{[\omega] \in \alpha^{n_0}} -\mu_{t'f}[\omega]\log(\mu_{t'f}[\omega]), 
\]
follows immediately of the above inequality that
\begin{equation}\label{inequality}
\sum_{[\omega] \in \alpha^{n_0}}\inf_{t \geq t_j}\{-\mu_{tf}[\omega]\log(\mu_{tf}[\omega])\}
\leq \inf_{t' \geq t_j} \left\{\sum_{[\omega] \in \alpha^{n_0}} -\mu_{t'f}[\omega]\log(\mu_{t'f}[\omega]\right\}.
\end{equation}
Using the monotone convergence theorem for $\phi_j(x) = \inf_{t \geq t_j}\{-\mu_{tf}(x)\log(\mu_{tf}(x))\}$ with the counting measure on the set $\{[\omega] : [\omega] \in \alpha^{n_0}\}$, follows that
\[
\lim_{j \to \infty} \sum_{[\omega] \in \alpha^{n_0}} \inf_{t \geq t_j}\{-\mu_{tf}[\omega]\log(\mu_{tf}[\omega])\} = \sum_{[\omega] \in \alpha^{n_0}} \lim_{j \to \infty} \inf_{t \geq t_j}\{-\mu_{tf}[\omega]\log(\mu_{tf}[\omega])\}.
\]
On other hand $\mu_{\infty}(\partial[\omega])=0$ for each $[\omega] \in \alpha^{n_0}$ and $\lim_{j \to \infty}\mu_{t_jf} = \mu_{\infty}$, then $\lim_{j \to \infty}\mu_{t_jf}[\omega] = \mu_{\infty}[\omega]$. Therefore taking the limit as $j \to \infty$ in both sides of (\ref{inequality}) we conclude that
\[
\sum_{[\omega] \in \alpha^{n_0}}-\mu_{\infty}[\omega]\log(\mu_{\infty}[\omega])
\leq \liminf_{t \to \infty} \sum_{[\omega] \in \alpha^{n_0}} -\mu_{tf}[\omega]\log(\mu_{tf}[\omega]).
\]
An analogous proof shows that $\limsup_{t \to \infty}H(\mu_{tf}|\alpha^{n_0}) \leq H(\mu_{\infty}|\alpha^{n_0})$, therefore $\lim_{t \to \infty}H(\mu_{tf}|\alpha^{n_0}) = H(\mu_{\infty}|\alpha^{n_0})$. 

Then following the same proof of proposition 2 we obtain that 
\[
\limsup_{t \to \infty}h(\mu_{tf}) \leq h(\mu_{\infty}).
\]
Finally using the inequalities (\ref{inequality 1}) and (\ref{inequality 2}), follows that 
\[ 
h(\hat{\mu}) \geq h(\mu_{\infty}) \geq \limsup_{t \to \infty}h(\mu_{tf}) \geq h \geq h(\hat{\mu}),
\]
and this concludes our proof.
\end{proof}

\section*{Acknowledgments}
The authors are extremely grateful to professor Rodrigo Bissacot for reading the first drafts, talks and suggestions that greatly improved the final exposition in this paper. We would also like to thank Jose Chauta, who identified a gap in an earlier version of the paper, and Prof. Mariusz Urba\'nski for the nice report that helped us finish this work. 

\bibliographystyle{abbrv}
%\bibliography{bibliografia}

\end{document}